\documentclass[12pt, letter paper]{amsart}
\usepackage[utf8]{inputenc}
\usepackage{amsmath}
\usepackage{amssymb}
\usepackage{amsthm}
\usepackage[dvipsnames]{xcolor}
\usepackage{comment}

\newtheorem{theorem}{Theorem}[section]
\newtheorem{corollary}[theorem]{Corollary}
\newtheorem{lemma}[theorem]{Lemma}

\newtheorem{proposition}[theorem]{Proposition}
\newtheorem{remark}[theorem]{Remark}
\theoremstyle{definition}

\DeclareMathOperator\supp{supp}

\usepackage[luatex,colorlinks=true, linkcolor=WildStrawberry, citecolor=PineGreen]{hyperref}

\subjclass[MSC 2020]{35Q53}

\keywords{Quasi-invariance, Gaussian measures, Hamiltonian PDEs}

\title[On the a.s. growth of $C^{\sigma}$ norms for 1d fractional BBM]{On the almost sure growth of Hölder norms for the 1d periodic fractional BBM equation}
\author{Pablo Merino}
\address[P. Merino]{Basque Center for Applied Mathematics, 48009 Bilbao, Basque Country}
\date{}

\begin{document}
	\maketitle
	
	\begin{abstract}
	We present almost sure polynomial bounds for Hölder norms of solutions of the 1d periodic fractional Benjamin-Bona-Mahony (BBM) equation. Namely, we apply quantitative quasi-invariance of certain Gaussian measures with energy cutoff using the strategy from Tzvetkov (2015) and the globalization argument from Bourgain (1994) in order to extend, almost surely, the $L^2$-based deterministic control to the $L^{\infty}$-based setting.
	\end{abstract}
	
	\tableofcontents
	
	
	
	\section{Introduction}
Let $\beta > 1$. We consider the fractional Benjamin-Bona-Mahony (BBM) equation, posed on the one-dimensional torus $\mathbb{T} := \mathbb{R}/2\pi \mathbb{Z}$:
	\begin{equation}\label{eq:cauchy-bbm}
    \left\{
	\begin{array}{l}
    \partial_t u + \partial_t |D|^{\beta} u + \partial_x u + \partial_x (u^2) = 0, \\
     u(0,x) = u_0(x),
    \end{array}
    \right.
	\end{equation}
	where $u : \mathbb{R} \times \mathbb{T} \rightarrow \mathbb{R}$ is the unknown function, $D = \frac{1}{i} \partial_x$ and
	\begin{align*}
	(|D|^{\beta}u)(x) := \sum_{n \in \mathbb{Z}} |n|^{\beta} \hat{u}(n) e^{inx}, \quad x \in \mathbb{T}.
	\end{align*}
	For each $t \in \mathbb{R}$, we denote by $\Phi(t)$ the flow from the equation in \eqref{eq:cauchy-bbm}. The mean $\int_{\mathbb{T}}udx$ is a conserved quantity of \eqref{eq:cauchy-bbm}. This allows us to restrict our discussions to mean zero $L^2$-based Sobolev spaces, which we will denote as $H^s$, given $s \geq 0$. Similarly, we will denote (restricting to mean zero elements) by $L^p$ the usual Lebesgue spaces, by $W^{\sigma,p}$ the sets of distributions on $\mathbb{T}$ which weak derivatives of order up to $\sigma$ are in $L^p$, given $\sigma \geq 0$ and $p \in [1,\infty)$, and by $C^{\sigma}$ the classical Hölder spaces, given $\sigma > 0$ and $\sigma \notin \mathbb{N}$ (see \eqref{eq:holdernorm} below). We introduce also the notation $\mathbb{Z}^* = \mathbb{Z} \setminus \{0\}$. For convenience, we define the truncated Cauchy problem as ($N \geq 1$)
	\begin{equation}\label{eq:cauchy-bbm-t}
    \left\{
	\begin{array}{l}
    \partial_t u + \partial_t |D|^{\beta} u + \partial_x u + \partial_x P_{\leq N} ((P_{\leq N}u)^2) = 0, \\
     u(0,x) = u_0(x),
    \end{array}
    \right.
	\end{equation}
	and denote its flow as $\Phi_N(t)$, where we called $P_{\leq N}$ the projection operator defined by $\widehat{P_{\leq N}v}(n) = \mathrm1_{|n| \leq N}\widehat{v}(n)$ for any $v \in L^2$. Analogously, $P_N = P_{\leq N} - P_{\leq N/2}$ and $P_{> N} = \mathrm1 - P_{\leq N}$.
	
	As it is explained in \cite{Tzvetkov2015}, this equation can be seen as a simplified model for the study of water waves, giving a correspondence to KdV-type equations for $\beta = 2$ (shallow small-amplitude long waves) and to Benjamin-Ono-type equations for $\beta = 1$ (interval waves). See \cite{L13} for more details.
	\hfill \break
	
	We are interested in studying the almost sure long-time behaviour of the norms of the solutions in \eqref{eq:cauchy-bbm} in $L^2$-based Sobolev spaces $H^{\sigma}$ and, mainly, in suitable Hölder spaces $C^{\sigma}$, in the form of polynomial in time bounds for these norms. This provides an interesting insight in the analysis of the forward cascade phenomenon \cite{Hani2017, Bourgain1996-sob, Bourgain1995-booksec} for this particular PDE, namely regarding its intersection with the study of dispersive Cauchy systems with random data, distributed according to Gaussian measures in Sobolev spaces with high regularity, which exhibit quasi-invariance under the flow \cite{Tzvetkov2015, GLT2023, GLT2023-1, Forlano2025}.
	
	The $L^2$-based Sobolev spaces are natural in order to study well-posedness of \eqref{eq:cauchy-bbm}, since the $H^{\beta/2}$-norm is formally conserved under the flow. In Lemma 2.3 of \cite{Tzvetkov2015} it was shown that, given $\beta>1$, $s \geq \frac{\beta}{2}$ and $u_0 \in H^s$, there exists a local existence time $\tau > 0$ depending only on $\|u_0\|_{H^{\frac{\beta}{2}}}$ such that a unique solution $u$ of \eqref{eq:cauchy-bbm} exists and satisfies
	\begin{align*}
	\|u(t)\|_{L^{\infty}([-\tau,\tau] , H^s)} \leq 2 \|u_0\|_{H^s}.
	\end{align*}
	Moreover, this local solution satisfies the energy conservation law (see Lemma 2.4 in \cite{Tzvetkov2015})
	\begin{align}\label{eq:con-law}
		\frac{d}{dt}\left( \|u(t)\|_{L^2}^2 + 4\pi\|u(t)\|_{H^{\frac{\beta}{2}}}^2 \right) = 0.
	\end{align}
	In particular, the $H^{\frac{\beta}{2}}$ norm of the solution remains bounded on $[-\tau,\tau]$. Therefore, we have the global-in-time bound
	\begin{align}\label{eq:det-exp-growth}
	\|u(t)\|_{H^s} \leq 2^{\lceil t/\tau \rceil + 1} \|u_0\|_{H^s}, \quad \text{ for any } t \in \mathbb{R}.
	\end{align}
	However, in order to get polynomial bounds of these Sobolev norms, more subtle estimates are needed. In Section \ref{sec:detbounds} we provide some insight on this matter based on works \cite{Tzvetkov2015} and \cite{GLT2023}. Up to the author's knowledge, the lowest known (deterministic) growth control for solutions in \eqref{eq:cauchy-bbm} were given in \cite{GLT2023} by the estimate in Proposition \ref{prop:smoothing} below, which formally reads like
	\begin{align}\label{eq:smoothing}
	\frac{d}{dt}\|u(t)\|_{H^{s + \frac{\beta}{2}}}^2 \lesssim \| u(t)\|_{H^s}^2 \|\partial_x u(t)\|_{L^{\infty}},
	\end{align}
	for $\beta > 1$ and $s > \frac{1}{2}$. By standard differentiation and interpolation, this sort of estimates allows us to bound as
	\begin{align}\label{eq:detbound}
	\|u(t)\|_{H^{s + \frac{\beta}{2}}} \lesssim \|u_0\|_{H^{s + \frac{\beta}{2}}} (1 + |t|)^{G_{\beta}(s)},
	\end{align}
	for $t \in \mathbb{R}$ and $G_{\beta}(s)$ a function of $s$ which diverges for $s \rightarrow \infty$. It is desirable to get $G_{\beta}$ independent of $s$, but neither deterministic nor probabilistic tools in this work give us such a result. 

In the deterministic setting, there is a limitation regarding the control of Hölder and $L^p$-based Sobolev norms when $p \neq 2$. Namely, recalling the Sobolev embedding $H^{\frac{1}{2}+\varepsilon} \hookrightarrow L^{\infty}$ for any $\varepsilon > 0$, from the growth control \eqref{eq:detbound} we obtain directly control of $C^{s + \frac{\beta}{2}}$-norms, as long as we assume $u_0 \in H^{s + \frac{\beta + 1}{2} + \varepsilon}$ for some $\varepsilon > 0$. In this spirit, this work provides almost sure polynomial bounds in the $L^{\infty}$ setting (in particular through Hölder norms) without such a loss in the initial datum. For that, we use quantitative quasi-invariance following the strategy from the seminal work \cite{Tzvetkov2015}. We recall that a measure $\mu$ on a locally convex space $X$ is quasi-invariant with respect to a transformation $\Phi : X \rightarrow X$ when $\mu \circ \Phi^{-1}$ is mutually absolutely continuous with respect to $\mu$.

In other words, this work deals with a particular application of the transport properties of Gaussian measures supported on high regularity Sobolev spaces when they are seen through the flow of certain Hamiltonian PDE. In the aforementioned work \cite{Tzvetkov2015}, the author provided a framework for the study of such transport properties in the form of quasi-invariance under the flow of \eqref{eq:cauchy-bbm}. It is also worth to refer to \cite{GLT2023} for the same PDE, \cite{GLT2023-1, Forlano2025} for the Benjamin-Ono-BBM case $\beta = 1$, \cite{OhTzvetkov2018, GOTW22} for the nonlinear wave equation, \cite{ForlanoTolomeo2025} for fractional nonlinear Schrödinger equations, or \cite{OhSosoeTzvetkov2018} for the cubic fourth order nonlinear Schrödinger equation, among many others. In particular, quasi-invariance becomes useful when we study almost sure dynamical properties of the PDE. For instance, we refer to almost sure scattering on $\mathbb{R}$ \cite{BurqThomann2024} or almost sure nonlinear smoothing \cite{SunTzvetkov2025} for the nonlinear Schrödinger equation, as well as almost sure global bounds for fractional nonlinear Schrödinger equations with negative regularity data \cite{ForlanoTolomeo2025}.
\hfill \break

In order to introduce the probability measure, let $\{h_n^{\omega}\}_{n > 0}$, $\{l_n^{\omega}\}_{n>0}$ be two independent sequences of independent Gaussian random variables in a probability space $(\Omega,\mathbb{P})$. Let $g_0^{\omega}$ be a standard Gaussian random variable independent of anything else and let
	\begin{align}\label{eq:def-gnomega}
	g_n^{\omega} := \begin{cases}
	\frac{1}{\sqrt{2}}(h_n^{\omega} + i l_n^{\omega}), & n \in \mathbb{N}, \\
	\frac{1}{\sqrt{2}}(h_n^{\omega} - i l_n^{\omega}), & -n \in \mathbb{N}.
	\end{cases}
    \end{align}		
Let $\beta > 1$ and $s \geq 0$, and denote by $\gamma_s$ the centered Gaussian measure on $H^s$ with covariance operator $|D|^{-s - \frac{\beta}{2}}$, i.e. the probability measure induced by the random map
\begin{align}\label{eq:inducing-map}
\omega \mapsto \varphi(\omega,x) := \sum_{n \in \mathbb{Z}^*} \frac{g_n^{\omega}}{|n|^{s+\frac{\beta}{2}}} e^{inx}.
\end{align}
Given $p \in [1,\infty)$, the spaces $W^{s + \frac{\beta - 1}{2} - \varepsilon,p}$ and $C^{s + \frac{\beta-1}{2} -\varepsilon}$ (the latter with $s + \frac{\beta-1}{2} -\varepsilon \notin \mathbb{N}$) are of $\gamma_s$-full measure for any $\varepsilon > 0$. Introducing a rigid cutoff which depends on a uniformly bounded quantity throughout the dynamics of \eqref{eq:cauchy-bbm} (recall \eqref{eq:con-law}), define the measure ($s \geq \beta/2$)
\begin{align}\label{eq:meas-rhos}
\rho_s(du) := 1_{B_{\beta/2}(R)} (u) \gamma_s(du),
\end{align}
where we denote $\mathrm1_{A}$ the characteristic function of the set $A$ and $B_{\sigma}(R)$ the centered ball of radius $R>0$ within the space $H^{\sigma}$. Moreover, we denote 
\begin{align}\label{eq:En-def}
E_N := \text{span}_{\mathbb{R}} \{(\cos(nx) , \sin(nx)) , 1 \leq |n| \leq N\},
\end{align}
and by $E_N^{\perp}$ the orthogonal complement of $E_N$ in the topology of $L^2(\mathbb{T})$. Letting $\gamma^{\perp}_{s,N}$ be the measure induced in $E_N^{\perp}$ by the map
\begin{align*}
\omega \mapsto \sum_{|n| > N} \frac{g_n^{\omega}}{|n|^{s+\frac{\beta}{2}}} e^{inx},
\end{align*}
then the measure $\gamma_s$ factorises in $E_N \times E_N^{\perp}$ as
\begin{align}\label{eq:factorization-gammas}
\gamma_s := \frac{1}{Z_N} e^{-\|P_{\leq N} u\|^2_{H^{s+\frac{\beta}{2}}}} L_N(dP_{\leq N} u) \gamma_{s,N}^{\perp} (dP_{>N} u), 
\end{align}
where $L_N$ is the Lebesgue measure induced on $E_N$ by the isomorphism between $E_N$ and $\mathbb{R}^{2N}$, and $Z_N$ is a renormalization factor. 

Going back to the Cauchy problem \eqref{eq:cauchy-bbm}, in \cite{Tzvetkov2015} it was proved that, given $\beta > \frac{4}{3}$, then for every integer $s \geq \frac{\beta}{2}$ and $t \in \mathbb{R}$ the probability measure $\gamma_s$ is quasi-invariant under the flow $\Phi(t)$. In the same spirit, given $\beta > 1$, in \cite{GLT2023} they introduced an additional exponential cutoff in order to prove quasi-invariance under $\Phi(t)$ for any $t \in \mathbb{R}$ and $s > \max\{\frac{\beta}{2} , \frac{3 - \beta}{2}\}$, providing a specific formula of the transported measures. Furthermore, considering $\beta = 1$ and using more subtle stochastic analysis, in \cite{GLT2023-1} this quasi-invariance of $\gamma_s$ was proved for any $s > 1$, and in \cite{Forlano2025} this was extended to $s > \frac{1}{2}$. 

It should be noticed that the analysis given in \cite{Tzvetkov2015, GLT2023} suffices to prove almost sure control of $H^{s+\frac{\beta-1}{2}-\varepsilon}$-norms ($\varepsilon > 0$) for data in the support of $\gamma_s$, and that the same polynomial bounds can be obtained using only deterministic tools (see Remark 7.4 in \cite{Tzvetkov2015}). Therefore, our goal is to propose an extension to the almost sure $C^{s + \frac{\beta-1}{2}-\varepsilon}$-norm control for $\varepsilon > 0$, given $\beta > 1$ and $s$ large enough.

\subsection{Statement of main results}

Regarding quasi-invariance, and namely quantitative quasi-invariance, a slight modification of the proofs in \cite{Tzvetkov2015} and \cite{GLT2023} allows us to state the following result. As they pointed out, the classical result by Ramer \cite{Ramer1974} already gives quasi-invariance for $\beta > 2$.

\begin{theorem}\label{thm:qinv}
Let $\beta > 1$, $s > \frac{\beta}{2}$ and $0 < \delta \ll 1$, and consider $R>0$ as energy cutoff in \eqref{eq:meas-rhos}. Assume also that $s > \frac{3-\beta}{2}$ if $\beta < 3$. Then $\gamma_s$ is quasi-invariant under the flow $\Phi(t)$ for any $t \in \mathbb{R}$. Furthermore, there exists some constant $C_{\delta,R}>0$ dependent on $\delta$ and $R$ such that
\begin{align}\label{eq:qinv-estimate}
\rho_s(\Phi(t)(A)) \leq \rho_s(A)^{1 - \delta} e^{C_{\delta,R}(1 + |t|)^{\frac{1}{1 - \alpha}}},
\end{align}
where
\begin{align}\label{eq:def-alpha}
\alpha := \left\{
\begin{array}{ll}
1 - \frac{3(\beta-1)}{4s}, & \text{ if } \beta < 3 \text{ and }  s > \frac{3 - \beta}{2},  \\
1 - \frac{\beta}{2s},  & \text{ if } \beta > 3.
\end{array}
\right.
\end{align}
\end{theorem}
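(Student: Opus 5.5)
We follow the strategy of \cite{Tzvetkov2015} and \cite{GLT2023}, and first work at the level of the truncated flows $\Phi_N(t)$ of \eqref{eq:cauchy-bbm-t}. On $E_N$ the truncated equation is a finite-dimensional Hamiltonian-type ODE. Its vector field is divergence free, so the Lebesgue measure $L_N$ is invariant. On $E_N^{\perp}$ the flow is linear and unitary in every $H^\sigma$, so $\gamma_{s,N}^{\perp}$ is invariant there. The truncated flow also conserves $\|u\|_{L^2}^2+4\pi\|u\|_{H^{\beta/2}}^2$, so the cutoff $\mathrm{1}_{B_{\beta/2}(R)}$ is invariant up to a harmless adjustment of the conserved quantity. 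Using the factorisation \eqref{eq:factorization-gammas}, we then get the change-of-variables formula
\begin{align*}
\rho_s(\Phi_N(t)A)=\int_A \mathrm{1}_{B_{\beta/2}(R)}(u)\,\exp\Bigl(-\|P_{\leq N}\Phi_N(t)u\|_{H^{s+\frac{\beta}{2}}}^2+\|P_{\leq N}u\|_{H^{s+\frac{\beta}{2}}}^2\Bigr)\,\gamma_s(du).
\end{align*}
Differentiating in $t$, the key quantity is
\begin{align*}
Q_N(u):=\frac{d}{dt}\|P_{\leq N}\Phi_N(t)u\|^2_{H^{s+\frac{\beta}{2}}}\Big|_{t=0}.
\end{align*}
By the smoothing estimate \eqref{eq:smoothing} (Proposition \ref{prop:smoothing}, applied uniformly in $N$), $Q_N$ is bounded by $\|u\|_{H^{s}}^2\|\partial_x u\|_{L^\infty}$. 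This sits at a regularity strictly below that of the support of $\gamma_s$, namely $H^{s+\frac{\beta-1}{2}-}$.

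The main step is an energy-type bound giving $\alpha$. We interpolate $\|u\|_{H^s}$ between the conserved $H^{\beta/2}$ norm, which is bounded by $R$, and a norm of the support. We treat $\|\partial_x u\|_{L^\infty}$ the same way, via Sobolev embedding or a $W^{1,p}$ bound that is available $\gamma_s$-a.s. The result should be a bound of the form
\begin{align*}
|Q_N(u)| \le C_R\, F(u)^{\alpha},
\end{align*}
where $F(u)$ is a quantity with Gaussian tails under $\gamma_s$, for instance a suitable power of an $H^{s+\frac{\beta-1}{2}-\varepsilon}$ or $W^{\sigma,p}$ norm. Alternatively one can use the large-deviation form of this bound: a Fernique/Wiener-chaos tail estimate $\gamma_s(|Q_N|>\lambda)\le e^{-c\lambda^{1/\alpha}}$ uniformly in $N$. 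The exponent $\alpha$ comes from the interpolation bookkeeping. When $\beta<3$ the constraint is the derivative on $L^\infty$, which requires $s>\frac{3-\beta}{2}$ and produces $\alpha=1-\frac{3(\beta-1)}{4s}$. When $\beta>3$ the $H^{\beta/2}$ control dominates and gives $1-\frac{\beta}{2s}$. I expect this step to be the main obstacle. Obtaining exactly these exponents, uniformly in $N$, needs a careful Littlewood--Paley decomposition of the trilinear form, possibly with a modified energy in the regime $\beta<3$, as in \cite{GLT2023}.

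Next comes the Yudovich-type argument of \cite{Tzvetkov2015}. Set $f(t)=\rho_s(\Phi_N(t)A)$ and differentiate using the formula above. By Hölder with exponent $p$,
\begin{align*}
f'(t)\le \|Q_N\|_{L^p(\rho_s)}\, f(t)^{1-\frac1p}\lesssim C_R\, p^{\alpha} f(t)^{1-\frac1p}.
\end{align*}
Integrating gives $f(t)\le \bigl(f(0)^{1/p}+C p^{\alpha-1}|t|\bigr)^p$. We optimise in $p$ by taking $p\sim \log(1/\rho_s(A))$, capped at a size of order $|t|^{1/(1-\alpha)}$. This yields
\begin{align*}
f(t)\le \rho_s(A)^{1-\delta}\, e^{C_{\delta,R}(1+|t|)^{\frac{1}{1-\alpha}}}.
\end{align*}
The polynomial-in-$t$ exponent $\frac{1}{1-\alpha}$ arises precisely because $\alpha<1$. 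The bound is uniform in $N$.

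Finally, we pass to the limit $N\to\infty$. We use the approximation $\Phi_N(t)u\to\Phi(t)u$ in $H^{\sigma}$ for $\sigma<s+\frac{\beta-1}{2}$, uniformly on compact sets; this follows from the local theory of Lemma 2.3 in \cite{Tzvetkov2015} together with \eqref{eq:con-law}. We apply the estimate first to compact sets $K$, enlarging $\Phi(t)K$ slightly to contain $\Phi_N(t)K$ for large $N$, and then to general Borel $A$ by inner regularity. This gives \eqref{eq:qinv-estimate}. Quasi-invariance of $\gamma_s$ then follows. If $\gamma_s(A)=0$, then $\rho_s(\Phi(t)A)=0$ for every $R$. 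Letting $R\to\infty$ and using that the $H^{\beta/2}$ norm is conserved, we get $\gamma_s(\Phi(t)A)=0$. The reverse direction follows by time reversibility.
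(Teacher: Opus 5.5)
Your outer architecture matches the paper's and is fine: the change-of-variables formula, Hölder in $\rho_s$, integrating $\frac{d}{dt}\rho_s(\Phi_N(t)A)^{1/p}\le Cp^{\alpha-1}$ with $p\sim\log(1/\rho_s(A))$ plus Young or capping, the limit $N\to\infty$ via compact sets and inner regularity, and $R\to\infty$ for quasi-invariance. The gap is in the step you flagged, the moment bound $\|Q_N\|_{L^p_{\rho_s}}\le Cp^{\alpha}$ of Proposition~\ref{prop:lpcontrol}.

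\textbf{Why interpolation cannot give this $\alpha$.} Your mechanism is deterministic interpolation between the cutoff norm $H^{\beta/2}$ and a Gaussian-tailed norm $X$ of the support. Since $\|\,\|u\|_X^k\|_{L^p}$ is of order $p^{k/2}$, the moment exponent you get is half the total interpolation power.
\begin{itemize}
\item For the derivative factor this is harmless. Interpolating with $C^{s+\frac{\beta-1}{2}-\varepsilon}$ gives $\|\partial_xu\|_{L^\infty}\lesssim R^{1-\phi}\|u\|_{C^{s+\frac{\beta-1}{2}-\varepsilon}}^{\phi}$ with $\phi=\frac{3-\beta}{2(s-\varepsilon)}$, i.e.\ moments $p^{\frac{3-\beta}{4(s-\varepsilon)}}$, essentially as in the paper.
\item For the $H^s$ factor it is lossy. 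Interpolating with $H^{\sigma}$, $W^{\sigma,q}$ or $C^{\sigma}$, where $\sigma=s+\frac{\beta-1}{2}-\varepsilon$, gives $\|u\|_{H^s}^2\lesssim R^{2-2\theta}\|u\|_X^{2\theta}$ with $\theta=\frac{2s-\beta}{2s-1-2\varepsilon}$. At best this yields $\rho_s(\|u\|_{H^s}^2>\lambda)\le e^{-c\lambda^{(2s-1-2\varepsilon)/(2s-\beta)}}$ and moments $p^{\theta}$. The paper instead gets $e^{-c\lambda^{2s/(2s-\beta)}}$, i.e.\ $p^{\frac{2s-\beta}{2s}}$.
\end{itemize}
The loss is structural. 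A norm's Gaussian tail has a single variance scale, set by the low modes. But $\|P_Mu\|_{H^s}^2\simeq\sum_{|n|\sim M}|n|^{-\beta}|g_n|^2$ is $M^{-\beta}$ times a $\chi^2$ variable with $\sim M$ degrees of freedom. Its large deviations therefore cost $e^{-c\lambda M^{\beta}}$, governed by the largest eigenvalue $M^{-\beta}$, not by the typical size $M^{1-\beta}$ that interpolation sees.

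\textbf{Consequences for the exponent.} Your route yields $\alpha'=\frac{2s-\beta}{2s-1}$ for $\beta>3$, instead of $1-\frac{\beta}{2s}$. For $\beta<3$ it yields $\alpha'\approx\frac{2s-\beta}{2s-1}+\frac{3-\beta}{4s}$. This exceeds $1$ for every admissible $s$ when $\beta\le\frac43$, and also e.g.\ for $\beta=\frac32$, $s=2$, where $\alpha'\approx1.02$ against the paper's $\alpha=0.8125$. In that range the Yudovich integration gives nothing. Where $\alpha'<1$ you still get \eqref{eq:qinv-estimate}, but with the worse exponent $\frac{1}{1-\alpha'}$.

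Your alternative, a Fernique or chaos tail bound for $Q_N$ under $\gamma_s$, is false as stated. Without the cutoff, $Q_N$ is a cubic form whose tails are no better than $e^{-c\lambda^{2/3}}$, giving moments $\sim p^{3/2}$. So the cutoff has to enter in a $\lambda$-dependent way.

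\textbf{The missing idea.} This is the paper's blockwise large-deviation argument under $\rho_s$.
\begin{itemize}
\item Split $\{\|u_N\|_{H^s}^2\|\partial_xu_N\|_{L^\infty}>\lambda\}\subset\{\|u_N\|_{H^s}^2>\lambda^{\eta}\}\cup\{\|\partial_xu_N\|_{L^\infty}>\lambda^{1-\eta}\}$.
\item For each event, pick a $\lambda$-dependent frequency, $N_1\sim\lambda^{\eta/(2s-\beta)}$ resp.\ $N_2\sim\lambda^{2(1-\eta)/(3-\beta)}$. Below it, Bernstein and $\|u\|_{H^{\beta/2}}\le R$ show the event cannot occur.
\item Above it, sum over dyadic blocks $M$ the concentration bounds \eqref{eq:gconc-2} for $\sum_{|n|\sim M}|g_n|^2$ and \eqref{eq:gconc-1} for $\sum_{|n|\sim M}|g_n|$. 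An extra Markov/Bernstein step is needed when $s\le\frac{4-\beta}{2}$.
\item This gives tails $e^{-c\lambda^{2s\eta/(2s-\beta)}}$ and $e^{-c\lambda^{4s(1-\eta)/(3-\beta)}}$. Optimizing in $\eta$ yields $e^{-c\lambda^{1/\alpha}}$ with $\alpha=1-\frac{3(\beta-1)}{4s}$.
\item For $\beta>3$, use $\|\partial_xu\|_{L^\infty}\lesssim R$ and the first tail with $\eta=1$.
\end{itemize}
No modified energy is needed.
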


\begin{remark}
We make the observation that the case $\beta = 3$ is not considered in Theorem \ref{thm:qinv} because none of the methods presented in its proof (see Section \ref{sec:qinv}) let us control the corresponding $L^p_{\rho_s}$ momentum, recalling that $H^{\frac{3}{2} + \varepsilon} \hookrightarrow W^{1,\infty}$ only holds for $\varepsilon > 0$, and that we cannot take the energy cutoff as in \eqref{eq:proofint-lp-3}. This would require a different scheme.
\end{remark}

Furthermore, a suitable adaptation of Bourgain's globalization argument from \cite{Bourgain1994} let us give almost sure bounds for Hölder norms of the solution in \eqref{eq:cauchy-bbm} as follows. 

\begin{theorem}\label{thm:holderbound}
Let $\beta$ and $s$ as in Theorem \ref{thm:qinv}, $\varepsilon \in (0,s - \frac{1}{2}]$ such that $s + \frac{\beta - 1}{2} - \varepsilon \notin \mathbb{N}$, and take $\delta_1 > 0$ arbitrarily small. Then for $\rho_s$-almost every $u_0$ there exists some $C>0$ such that 
\begin{align*}
\|\Phi(t)(u_0)\|_{C^{s + \frac{\beta-1}{2}-\varepsilon}} \leq C (1 + |t|)^{E(s,\beta) + \delta_1}
\end{align*}
for any $t \in \mathbb{R}$, where
\begin{align}\label{eq:def-exponent}
E(s,\beta) := \left\{
\begin{array}{ll}
\frac{2s}{3(\beta - 1)}, & \text{ if } \beta < 3 \text{ and }  s > \frac{3 - \beta}{2},  \\
\frac{s}{\beta},  &  \text{ if } \beta > 3.
\end{array}
\right.
\end{align}
\end{theorem}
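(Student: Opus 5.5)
The plan is to run Bourgain's globalization argument from \cite{Bourgain1994}, with the invariance of the measure replaced by the quantitative quasi-invariance estimate \eqref{eq:qinv-estimate} of Theorem \ref{thm:qinv}, and with the deterministic local theory (Lemma 2.3 of \cite{Tzvetkov2015} and the energy law \eqref{eq:con-law}) supplying short-time stability. Set $\sigma := s + \frac{\beta-1}{2}-\varepsilon$.

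\textbf{Step 1: large deviations for the Hölder norm.} Write $\sigma = \sigma' - \eta$ with $0<\eta\ll1$ and $\sigma'<s+\frac{\beta-1}{2}$. Using the randomization \eqref{eq:inducing-map}, Littlewood--Paley pieces, hypercontractivity of Gaussians and the Sobolev embedding $W^{\sigma',p}\hookrightarrow C^{\sigma}$ for $p$ large, we get $\gamma_s(\|u\|_{C^\sigma}>\lambda)\le Ce^{-c\lambda^2}$. The same bound then holds for $\rho_s$.

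\textbf{Step 2: local-in-time control in $C^\sigma$.} On $\supp\rho_s$ we have $\|u\|_{H^{\beta/2}}\le R$, which is preserved by \eqref{eq:con-law}. So the local time $\tau=\tau(R)$ from Lemma 2.3 of \cite{Tzvetkov2015} is uniform. We need a bound of the form $\sup_{|t'|\le\tau}\|\Phi(t')u\|_{C^\sigma}\le C\|u\|_{C^\sigma}$, or at least $\lesssim$ a power of it. I would get this by writing the equation as the ODE $\partial_t u = -(1+|D|^\beta)^{-1}\partial_x(u+u^2)$. Since $\beta>1$, the operator $(1+|D|^\beta)^{-1}\partial_x$ is smoothing of order $\beta-1>0$ and bounded on Hölder spaces (it is a Fourier multiplier with a symbol of negative order). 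Together with $\|u^2\|_{C^\sigma}\lesssim\|u\|_{L^\infty}\|u\|_{C^\sigma}$ and $\|u\|_{L^\infty}\lesssim R$ (valid because $\beta/2>1/2$), Gronwall gives $\|\Phi(t')u\|_{C^\sigma}\le e^{CR\tau}\|u\|_{C^\sigma}$ on a time interval of length depending only on $R$. This is the step where the deterministic structure enters.

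\textbf{Step 3: globalization.} For $T$ dyadic, consider
\[A_T:=\{u: \|u\|_{C^\sigma}>\lambda_T\}, \qquad \lambda_T:=T^{E(s,\beta)+\delta_1}.\]
Let $B_T := \bigcup_{|k|\le T/\tau}\Phi(-k\tau)(A_T)$. By \eqref{eq:qinv-estimate} and Step 1,
\[
\rho_s(B_T)\lesssim \frac{T}{\tau}\, e^{-c(1-\delta)\lambda_T^2}\, e^{C_{\delta,R}(1+T)^{\frac{1}{1-\alpha}}} .
\]
Here $\frac{1}{1-\alpha}$ equals $\frac{4s}{3(\beta-1)}$ for $\beta<3$ and $\frac{2s}{\beta}$ for $\beta>3$, which is exactly $2E(s,\beta)$. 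Since $\lambda_T^2 = T^{2E+2\delta_1}$ beats $T^{2E}$, the sum over dyadic $T$ of $\rho_s(B_T)$ is finite. By Borel--Cantelli, for $\rho_s$-a.e. $u_0$ there is $T_0$ such that $u_0\notin B_T$ for all $T\ge T_0$. Then for $|t|\in[T/2,T]$ we write $t=k\tau+t'$ with $|t'|\le\tau$, note $\|\Phi(k\tau)u_0\|_{C^\sigma}\le\lambda_T$, and apply Step 2 to obtain
\[
\|\Phi(t)u_0\|_{C^\sigma}\lesssim (1+|t|)^{E+\delta_1}.
\]
Small times $|t|\le T_0$ are handled by compactness of the finite time interval together with Step 2, which absorbs everything into the constant $C$.

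\textbf{Main obstacle.} The delicate point is Step 2. Hölder bounds must propagate on a time scale controlled only by $R$ and not by $\|u\|_{C^\sigma}$, so that no further loss in $T$ appears. If the linear-in-$\|u\|_{C^\sigma}$ Gronwall bound fails for large $\sigma$ (for instance through the paraproduct structure of $u^2$), I would instead use the tame product estimate $\|u^2\|_{C^\sigma}\lesssim\|u\|_{L^\infty}\|u\|_{C^\sigma}$ via Littlewood--Paley and Bony's decomposition, which keeps the estimate linear. A secondary technical point is making the quasi-invariance estimate applicable to the sets $\Phi(-k\tau)(A_T)$, which requires applying \eqref{eq:qinv-estimate} with $t=-k\tau$ and $|k\tau|\le T$.
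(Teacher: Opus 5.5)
Your proposal is correct and follows the paper's own argument in essence. The ingredients match: Gaussian tails $e^{-c\lambda^2}$ for the $C^{\sigma}$ norm (Lemma \ref{lemma:largedev-alpha}); the quantitative quasi-invariance bound \eqref{eq:qinv-estimate} applied to preimages of large-norm sets at the grid times $k\tau$; Borel--Cantelli with threshold exponent $r>\frac{1}{2(1-\alpha)}=E(s,\beta)$; and filling in between grid times with an $R$-uniform local $C^{\sigma}$ bound, which your Gronwall argument supplies in place of Proposition \ref{prop:lwp-calpha} and Corollary \ref{cor:gwp-csigma}. Grouping times into dyadic blocks with a union bound, instead of summing over individual $j$, is only a cosmetic difference, and it also makes the paper's (unnecessary) Egorov step superfluous.
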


\begin{remark}
Let $\beta$ and $s$ as in Theorem \ref{thm:qinv}, and $\varepsilon \in (0,s - \frac{1}{2}]$ such that $s + \frac{\beta - 1}{2} - \varepsilon \notin \mathbb{N}$. Comparing with the analogous deterministic bounds given in Corollary \ref{cor:det-growth}, in Theorem \ref{thm:holderbound} we see that for $\rho_s$-almost every datum $u_0$, which is almost surely in $H^{s + \frac{\beta - 1}{2} - \varepsilon} \setminus H^{s + \frac{\beta - 1}{2}}$, the $C^{s + \frac{\beta-1}{2} - \varepsilon}$-norm of the solution in \eqref{eq:cauchy-bbm} admits polynomial bounds which exponents are the same than the ones obtained for the $C^{s + \frac{\beta - 1}{2} - \varepsilon}$-norm of the solution with datum in $H^{s + \frac{\beta}{2}}$ when we use only deterministic tools (Corollary \ref{cor:det-growth} below and Sobolev embedding). 
\end{remark}

\begin{remark}
The upper bound $\varepsilon \leq s - \frac{1}{2}$ in Theorem \ref{thm:holderbound} comes from the fact that $\sigma \geq \frac{\beta}{2}$ is required to have a well defined global dynamics in $C^{\sigma}$ spaces for \eqref{eq:cauchy-bbm}. See Proposition \ref{prop:lwp-calpha} and Corollary \ref{cor:gwp-csigma}.
\end{remark}

\subsection{Outline of this work}

In Section \ref{sec:det-pre-calpha} we justify the existence of global dynamics of \eqref{eq:cauchy-bbm} for data in suitable Hölder spaces intersected with $B_{\beta/2}(R)$, $R > 0$. In Section \ref{sec:detbounds}, we collect some results on the deterministic control of Sobolev norms for solutions of \eqref{eq:cauchy-bbm}. From the probabilistic side, in Section \ref{sec:prob-en-est} we estimate the $L^p_{\rho_s}$ momentum of the quantity
\begin{align*}
\left|\frac{d}{dt}\| P_{\leq N} \Phi_N(t)(u_0)\|_{H^{s + \frac{\beta}{2}}}^2 \right|,
\end{align*}
which is a crucial step for the proof of quasi-invariance in the setting of \cite{Tzvetkov2015}, and namely for the computation of $\alpha$ in Theorem \ref{thm:qinv}. Once this energy estimate is obtained, in Section \ref{sec:qinv} we prove Theorem \ref{thm:qinv} using the aforementioned setting. Finally, in Section \ref{eq:globalization} we adapt Bourgain's globalization argument from \cite{Bourgain1994} to the quantitative quasi-invariance scheme from Theorem \ref{thm:qinv} in order to obtain almost sure global bounds for suitable $C^{\sigma}$-norms of the solutions of \eqref{eq:cauchy-bbm}, provided $C^{\sigma}$ lies in the support of $\gamma_s$.

\section{Global well-posedness for data in $C^{\sigma}$}\label{sec:det-pre-calpha}
	Let $\sigma > 0$, $\sigma \notin \mathbb{N}$ and $\lfloor \sigma \rfloor$ the largest nonnegative integer lower than $\sigma$. Moreover, let $\varphi \in C^{\infty}_c(\mathbb{R})$ be a radial function valued in $[0,1]$ with 
\begin{align*}
\bullet &\supp \varphi \subset \{ \xi \in \mathbb{R} : 3/4 \leq |\xi| \leq 8/3 \}, \\
\bullet &\sum_{N \in 2^{\mathbb{Z}}} \varphi(N^{-1} \xi) = 1, \text{ for any } \xi \in \mathbb{R} \setminus \{0\}, \\
\bullet &N,M \in 2^{\mathbb{Z}} \text{ such that } |\log_2(N) - \log_2(M)| \geq 2 \\
&\hspace{30mm} \Longrightarrow \supp \varphi(N^{-1} \cdot) \cap \supp \varphi(M^{-1} \cdot) = \emptyset,
\end{align*}	
and denote $\Delta_N := \varphi(N^{-1}D)$ for any $N \in 2^{\mathbb{Z}}$. We use the equivalence of norms between Besov and classical Hölder spaces (see \cite{Triebel1992, BCD2011-book}), for which the assumption $\sigma \notin \mathbb{N}$ is crucial. In this sense, consider the norm
	\begin{align}\label{eq:holdernorm}
	\|f\|_{C^{\sigma}} := \sum_{k = 0}^{\lfloor \sigma \rfloor} \| \partial_x^k f \|_{L^{\infty}} + \sup_{\substack{x,y\in\mathbb{T} \\ x \neq y}} \frac{|\partial_x^{\lfloor \sigma \rfloor}f(x) - \partial_y^{\lfloor \sigma \rfloor}f(y)|}{|x - y|^{\sigma - \lfloor \sigma \rfloor}},
\end{align}		
which is equivalent to the norm
\begin{align}\label{eq:equiv-norms}
\|f\|_{B^{\sigma}_{\infty,\infty}} := \sup_{N \in 2^{\mathbb{N}}} N^{\sigma} \| \Delta_N f\|_{L^{\infty}},
\end{align}
and which determines the Hölder space with regularity $\sigma$, denoted as $C^{\sigma}$. We stress that the reduction to mean zero elements in the periodic in space setting makes homogeneous and inhomogeneous Besov norms equivalent (namely, both of them can be written as the norm \eqref{eq:equiv-norms}). 

The main contribution of this work is to give almost sure bounds for the $C^{\sigma}$-norms of the solutions in \eqref{eq:cauchy-bbm}, for $C^{\sigma}$ within the support of the measure $\rho_s$ given in \eqref{eq:meas-rhos}. This requires to make sense of the global dynamics of \eqref{eq:cauchy-bbm} in this functional setting.
	
	Recall that the Cauchy problem \eqref{eq:cauchy-bbm} is globally well-posed with $u_0$ in the Sobolev space $H^s(\mathbb{T})$, $s \geq \frac{\beta}{2}$, with a global bound in time given by \eqref{eq:det-exp-growth} \cite{Tzvetkov2015}; and it is locally well-posed for $u_0$ in the Hölder space $C^{\sigma}$ for $\sigma \in (0,1)$ \cite{GLT2023}. Following a similar strategy, in Proposition \ref{prop:lwp-calpha} and Corollary \ref{cor:gwp-csigma} we extend the latter to global well-posedness for $\sigma \geq \frac{\beta}{2}$. We denote the free evolution of \eqref{eq:cauchy-bbm} by
	\begin{align*}
	S(t) = \exp(-t(1 + |D|^{\beta})^{-1} \partial_x ), \quad t \in \mathbb{R}.
	\end{align*}
	Before giving the main results of this Section, we provide some preliminary estimates which were already used in \cite{GLT2023}. Moreover, we notice that all the arguments about well-posedness in this section apply to negative times identically.
	
	\begin{lemma}\label{lemma:Calpha-est-calpha}
	Let $\beta > 1$ and $\varepsilon \in [0,\min\{\sigma,\beta-1\})$, and $f \in C^{\sigma}$ such that $\hat{f}(0) = 0$. Then
\begin{align}\label{eq:smoothing-calpha-beta}
\left\| \frac{\partial_x}{1 + |D|^{\beta}} f \right\|_{C^{\sigma}} \lesssim \|f\|_{C^{\sigma - \varepsilon}}.
\end{align}
In consequence, it holds that
\begin{align}\label{eq:smoothing-calpha-beta-prod}
\left\| \frac{\partial_x}{1 + |D|^{\beta}} (fg) \right\|_{C^{\sigma}} \lesssim \|f\|_{C^{\sigma - \varepsilon}}\|g\|_{L^{\infty}} + \|g\|_{C^{\sigma - \varepsilon}} \|f\|_{L^{\infty}},
\end{align}
and there exists some $C>0$ independent of $t$ such that
\begin{align}\label{eq:smoothing-calpha-beta-taylor}
\| S(t) f\|_{C^{\sigma}} \leq e^{C|t|}\|f\|_{C^{\sigma}}.
\end{align}
	\end{lemma}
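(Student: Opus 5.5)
We work throughout with the Littlewood--Paley characterization \eqref{eq:equiv-norms} of the $C^{\sigma}$-norm. Since $\hat f(0)=0$, only dyadic blocks $N \ge 1$ occur. The multiplier $m(\xi) = \frac{i\xi}{1+|\xi|^{\beta}}$ is smooth away from the origin and is a symbol of order $1-\beta$. Hence on each block $\Delta_N$ with $N\ge1$, the operator $m(D)\Delta_N$ acts as convolution with a kernel of $L^1$-norm $\lesssim N^{1-\beta}$. To see this, write $m(D)\Delta_N = m(D)\tilde\varphi(N^{-1}D)\Delta_N$, where $\tilde\varphi$ equals $1$ on $\supp\varphi$. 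The function $\xi\mapsto m(N\xi)\tilde\varphi(\xi)$ is smooth with all derivatives bounded by $N^{1-\beta}$ uniformly in $N\ge 1$, so its inverse Fourier transform, rescaled and periodized, has $L^1(\mathbb{T})$-norm $\lesssim N^{1-\beta}$. The periodic case reduces to the real-line kernel by Poisson summation. Consequently
\begin{align*}
N^{\sigma}\|\Delta_N m(D) f\|_{L^\infty} \lesssim N^{\sigma+1-\beta}\|\Delta_N f\|_{L^\infty} = N^{1-\beta+\varepsilon}\, N^{\sigma-\varepsilon}\|\Delta_N f\|_{L^\infty} \le \|f\|_{B^{\sigma-\varepsilon}_{\infty,\infty}},
\end{align*}
because $\varepsilon<\beta-1$ and $N \ge 1$. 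Taking the supremum over $N$ and applying the norm equivalence gives \eqref{eq:smoothing-calpha-beta}. There is one caveat. If $\sigma-\varepsilon$ is an integer or zero, the Besov norm $B^{\sigma-\varepsilon}_{\infty,\infty}$ is no longer equivalent to the classical norm. We then use the elementary bound $\|\Delta_N f\|_{L^\infty}\lesssim \|f\|_{L^\infty}$, or more generally $\|f\|_{B^{k}_{\infty,\infty}}\lesssim \|f\|_{C^k}$ for integer $k$ with $C^k$ defined via \eqref{eq:holdernorm}, which still controls the right-hand side. The condition $\varepsilon<\sigma$ keeps $\sigma-\varepsilon>0$, so that the right-hand side makes sense.

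For \eqref{eq:smoothing-calpha-beta-prod}, we apply \eqref{eq:smoothing-calpha-beta} to $fg - \widehat{fg}(0)$; the multiplier kills the zero mode anyway. It then suffices to prove the standard Hölder algebra (Moser-type) estimate
\begin{align*}
\|fg\|_{C^{\sigma-\varepsilon}} \lesssim \|f\|_{C^{\sigma-\varepsilon}}\|g\|_{L^\infty} + \|f\|_{L^\infty}\|g\|_{C^{\sigma-\varepsilon}}.
\end{align*}
For non-integer $\sigma-\varepsilon$, this follows directly from \eqref{eq:holdernorm}. Expand $\partial_x^k(fg)$ by Leibniz for $k\le\lfloor\sigma-\varepsilon\rfloor$. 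Bound the intermediate terms $\partial^j f\,\partial^{k-j}g$ by interpolation, $\|\partial^j f\|_{L^\infty}\lesssim \|f\|_{L^\infty}^{1-j/\theta}\|f\|_{C^\theta}^{j/\theta}$, followed by Young's inequality. Treat the Hölder seminorm of the top derivative in the same way. Alternatively, and more cleanly, use Bony's paraproduct decomposition in $B^{\theta}_{\infty,\infty}$ with $\theta>0$: the two paraproducts give the two terms, and the resonant term is summable because $\theta>0$. I would cite \cite{BCD2011-book} for this estimate rather than reprove it. The main technical point of the whole lemma is this product estimate when $\sigma-\varepsilon\in\mathbb{N}$; in that case I would instead choose a slightly smaller non-integer $\varepsilon'<\varepsilon$ or work directly with Besov norms.

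Finally, for \eqref{eq:smoothing-calpha-beta-taylor} we write $S(t)=\exp(-t\,m(D))$. Since $m(D)$ commutes with $\Delta_N$, the block estimate above with $\varepsilon=0$ shows that $m(D)$ is bounded on $B^{\sigma}_{\infty,\infty}$, and hence on $C^{\sigma}$ restricted to mean-zero functions, with some norm $C$; indeed $\|\Delta_N m(D)f\|_{L^\infty}\lesssim N^{1-\beta}\|\Delta_N f\|_{L^\infty}\le C_0\|\Delta_N f\|_{L^\infty}$ uniformly in $N$. Summing the exponential series in operator norm then gives $\|S(t)\|\le \sum_k |t|^k C^k/k! = e^{C|t|}$, which is the claim. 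Here $C$ absorbs the constants of the norm equivalence. To avoid an $|t|$-independent prefactor, we define $C$ as the operator norm of $m(D)$ for the Besov norm, so the bound is exact in the Besov norm and holds up to a harmless constant, or exactly after renorming, for \eqref{eq:holdernorm}.
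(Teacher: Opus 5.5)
Your proposal is correct and follows essentially the same route as the paper. The steps match: the block bound $\bigl\|\frac{\partial_x}{1+|D|^{\beta}}\Delta_N g\bigr\|_{L^\infty}\lesssim N^{1-\beta}\|\Delta_N g\|_{L^\infty}$, which the paper imports from Lemma 2.2 of \cite{BCD2011-book} and the transference theorem of \cite{SteinWeiss1971} and which your rescaling/Poisson-summation argument reproves; then the Besov characterization \eqref{eq:equiv-norms}, a Leibniz/paraproduct product estimate, and summation of the exponential series. Your handling of $\sigma-\varepsilon\in\mathbb{N}$ is more careful than the paper's, which silently uses the norm equivalence there.

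Two small corrections. First, in that integer case the auxiliary index must satisfy $\varepsilon<\varepsilon'<\min\{\sigma,\beta-1\}$, not $\varepsilon'<\varepsilon$, so that $\|f\|_{C^{\sigma-\varepsilon'}}\lesssim\|f\|_{C^{\sigma-\varepsilon}}$; in any case, working directly in $B^{\sigma-\varepsilon}_{\infty,\infty}$ already suffices. Second, for \eqref{eq:smoothing-calpha-beta-taylor} no renorming is needed: take $C$ to be the operator norm of $\frac{\partial_x}{1+|D|^{\beta}}$ on $C^{\sigma}$ with the norm \eqref{eq:holdernorm} itself, and the series gives $e^{C|t|}$ exactly.
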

	\begin{proof}
	The inequalities \eqref{eq:smoothing-calpha-beta-prod} and \eqref{eq:smoothing-calpha-beta-taylor} follow from \eqref{eq:smoothing-calpha-beta} by Leibniz fractional rule and inspection of the Taylor expansion of the exponential. Thus, it is enough to prove \eqref{eq:smoothing-calpha-beta}. Notice that, combining Lemma 2.2 from \cite{BCD2011-book} and Theorem 3.8 from Chapter 7 in \cite{SteinWeiss1971}, there exists a constant $C_{\beta} > 0$ depending only on $\beta$ such that
\begin{align*}
\left\|  \frac{\partial_x}{1 + |D|^{\beta}} \Delta_M g \right\|_{L^p} \leq C_{\beta} M^{-(\beta - 1)} \|\Delta_M g\|_{L^p},
\end{align*}	
for any $p \in [1,\infty]$, $M \in 2^{\mathbb{N}}$ and $g \in L^p$. Therefore, by \eqref{eq:equiv-norms},
\begin{align*}
\left\|  \frac{\partial_x}{1 + |D|^{\beta}} f \right\|_{C^{\sigma}} & = \sup_{N \in 2^{\mathbb{N}}} N^{\sigma} \left\|  \frac{\partial_x}{1 + |D|^{\beta}} \Delta_N f \right\|_{L^{\infty}} \\
&\leq C_{\beta} \sup_{N \in 2^{\mathbb{N}}} N^{\sigma - (\beta - 1)} \left\| \Delta_N f \right\|_{L^{\infty}}.
\end{align*}
Then, for any $\varepsilon \in [0,\min\{\sigma , \beta - 1\})$ we get that
\begin{align*}
\left\|  \frac{\partial_x}{1 + |D|^{\beta}} f \right\|_{C^{\sigma}} &\leq C_{\beta} \sup_{N \in 2^{\mathbb{N}}} N^{\sigma - \varepsilon} \left\| \Delta_N f \right\|_{L^{\infty}} = C_{\beta} \|f\|_{C^{\sigma - \varepsilon}}.
\end{align*}
\end{proof}

\begin{proposition}\label{prop:lwp-calpha}
Let $\beta > 1$, $\sigma \geq \frac{\beta}{2}$. There exists some $c > 0$ sufficiently small such that the following holds. Given $u_0 \in C^{\sigma}$, there exists a unique solution $u \in C([0,\tau],C^{\sigma})$ of \eqref{eq:cauchy-bbm-t} with initial datum $u_0$, $\tau = c/(1+\|u_0\|_{H^{\frac{\beta}{2}}})$ and
\begin{align*}
\sup_{t \in [0,\tau]} \| u(t) \|_{C^{\sigma}} \leq 2\|u_0\|_{C^{\sigma}}.
\end{align*}
The same result applies to the Cauchy problem \eqref{eq:cauchy-bbm}.
\end{proposition}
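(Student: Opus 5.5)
The plan is to construct the solution through the Duhamel formulation
\begin{align*}
u(t) = S(t)u_0 - \int_0^t S(t-t')\frac{\partial_x}{1+|D|^{\beta}}\Big(P_{\leq N}\big((P_{\leq N}u)^2\big)\Big)(t')\,dt',
\end{align*}
or without the projections for \eqref{eq:cauchy-bbm}, and to separate two tasks. The first is short-time existence and uniqueness in $C([0,T],C^{\sigma})$ with $T$ depending on $\|u_0\|_{C^{\sigma}}$. The second is an a priori bound that is \emph{linear} in the $C^{\sigma}$ norm, with coefficient controlled by the $H^{\frac{\beta}{2}}$ norm. This second bound is what allows the existence time to be enlarged to $\tau=c/(1+\|u_0\|_{H^{\frac{\beta}{2}}})$.

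For the first task, apply \eqref{eq:smoothing-calpha-beta-prod} with $\varepsilon=0$ together with \eqref{eq:smoothing-calpha-beta-taylor}. This gives, for $|t|\leq 1$,
\begin{align*}
\|u(t)\|_{C^{\sigma}} \leq e^{C}\|u_0\|_{C^{\sigma}} + C\int_0^t \|u(t')\|_{L^{\infty}}\|u(t')\|_{C^{\sigma}}\,dt',
\end{align*}
and an analogous difference estimate. A standard contraction on a ball of $C([0,T],C^{\sigma})$ with $T\sim (1+\|u_0\|_{C^\sigma})^{-1}$ then gives a unique local solution, and uniqueness in this class follows from the same difference estimate.

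For the second task, bound $\|u(t')\|_{L^{\infty}}\lesssim \|u(t')\|_{H^{\frac{\beta}{2}}}$ using the Sobolev embedding, valid since $\frac{\beta}{2}>\frac12$. Since $u_0\in C^{\sigma}$ and $\sigma\geq \frac{\beta}{2}$, $u_0$ should lie in $H^{\frac{\beta}{2}}$, so the $H^{\frac{\beta}{2}}$ norm of the solution is controlled by the local theory of \cite{Tzvetkov2015} ($\leq 2\|u_0\|_{H^{\frac\beta2}}$ on a time scale $\sim c/(1+\|u_0\|_{H^{\frac{\beta}{2}}})$), or directly by the conservation law \eqref{eq:con-law}. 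Gronwall then yields
\begin{align*}
\|u(t)\|_{C^{\sigma}}\leq \|u_0\|_{C^{\sigma}}\,e^{C|t|(1+\|u_0\|_{H^{\frac{\beta}{2}}})}\leq 2\|u_0\|_{C^{\sigma}}, \qquad |t|\leq\tau,
\end{align*}
provided $c$ is small. Since the $C^{\sigma}$ norm cannot blow up before $\tau$, iterating the short-time result yields the solution on $[0,\tau]$. One also has to check that the $C^{\sigma}$ solution coincides with the $H^{\frac{\beta}{2}}$ solution, which follows from uniqueness in the weaker space. The constant from \eqref{eq:smoothing-calpha-beta-taylor} must be absorbed as well: $e^{C\tau}\leq 2^{1/2}$ for $c$ small, which is why $\tau$ carries the factor $1+\|\cdot\|$.

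I expect two obstacles. The first is the embedding $C^{\sigma}\hookrightarrow H^{\frac{\beta}{2}}$ at the endpoint $\sigma=\frac{\beta}{2}$: $B^{\sigma}_{\infty,\infty}$ only embeds into $H^{\sigma-}$. There I would instead use the conservation law at the level of $u_0\in H^{\frac{\beta}{2}}$, assumed implicitly through $\|u_0\|_{H^{\frac{\beta}{2}}}$ in the definition of $\tau$, or replace the $L^{\infty}$ control by $\|u\|_{L^\infty}\lesssim\|u\|_{C^{\sigma}}$ only inside the contraction step. The second is the truncated problem \eqref{eq:cauchy-bbm-t}: the sharp projection $P_{\leq N}$ is not uniformly bounded on $L^{\infty}$ or $C^{\sigma}$. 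For fixed $N$, however, it is bounded on $C^{\sigma}$ with an $N$-dependent constant, since $P_{\leq N}$ is a finite-rank operator into trigonometric polynomials of degree $N$. I would accept $N$-dependent constants in the existence part. For the $\tau$-bound I would control $P_{\leq N}u$ in $L^{\infty}$ through $\|P_{\leq N}u\|_{H^{\frac{\beta}{2}}}\leq \|u\|_{H^{\frac\beta2}}$, which is uniform in $N$, and handle the outer projection via the smoothing gain $\varepsilon>0$ in \eqref{eq:smoothing-calpha-beta} if necessary.
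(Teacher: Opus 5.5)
Your proposal is correct and follows essentially the paper's route: the Duhamel formula, the bounds \eqref{eq:smoothing-calpha-beta-prod}--\eqref{eq:smoothing-calpha-beta-taylor}, a nonlinear estimate linear in $\|u\|_{C^{\sigma}}$ with coefficient $\|u\|_{L^{\infty}}\lesssim\|u\|_{H^{\frac{\beta}{2}}}$ controlled by the $H^{\frac{\beta}{2}}$ theory of \cite{Tzvetkov2015}, and smallness of $c$; your contraction--Gronwall--continuation scheme is a rigorous version of the paper's absorption argument, which tacitly presupposes $\sup_{[0,\tau]}\|u\|_{C^{\sigma}}<\infty$. The two obstacles you flag are genuine imprecisions in the paper's write-up (it invokes $C^{\sigma}\hookrightarrow H^{\frac{\beta}{2}}$, which fails at $\sigma=\frac{\beta}{2}$, and it treats $P_{\leq N}$ as uniformly bounded on $C^{\sigma}$), and your remedies are the right ones, provided you spend part of the $\varepsilon$-gain from \eqref{eq:smoothing-calpha-beta} on the inner factor $\|P_{\leq N}u\|_{C^{\sigma}}$ as well as on the outer projection, since each sharp projection loses only a factor $\log N$ on the dyadic blocks at frequency $\sim N$.
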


\begin{proof}
We follow the strategy of \cite{Tzvetkov2015} and Proposition 2.2 in \cite{GLT2023}. We rewrite \eqref{eq:cauchy-bbm-t} as the integral equation
\begin{align}\label{eq:int-cauchy-t}
u(t) := S(t) u_0 - \int_0^t S(t-t') \frac{\partial_x}{1 + |D|^{\beta}} P_{\leq N} ((P_{\leq N}u)^2(t')) dt'. 
\end{align} 
In the former work, it was proved that there exists a sufficiently small constant $c_{\beta} > 0$ such that, denoting $\tau_{\beta} = c_{\beta} / (1 + \|u_0\|_{H^{\frac{\beta}{2}}})$, there exists a unique solution $u$ in $C([0,\tau_{\beta}] , B_{\beta/2}(2\|u_0\|_{H^{\frac{\beta}{2}}}) )$.

In the same work, the propagation in $H^{\sigma}$ spaces was established in the sense of existence, uniqueness and continuity with respect to the initial data in $C([0,\tau'],H^{\sigma})$, with $\tau' > 0$ only depending on $\|u_0\|_{H^{\frac{\beta}{2}}}$, and
\begin{align*}
\|u\|_{L^{\infty}([0,\tau'] , H^{\sigma})} \leq 2 \|u_0\|_{H^{\sigma}}.
\end{align*}
We stress that the fact that $\tau'$ only depends on $\|u_0\|_{H^{\frac{\beta}{2}}}$, together with the conservation law \eqref{eq:con-law}, allows us to extend the solution globally in time and to give the bound in \eqref{eq:det-exp-growth}.

In the same spirit, we prove global well-posedness for data in $C^{\sigma}$. Firstly, let $u$ be the unique solution in $C([0,\tau_{\beta}] , H^{\frac{\beta}{2}})$, and take $0 < \tau \leq \tau_{\beta}$. For any $t \in [0,\tau]$ and $\varepsilon \in [0,\min\{\sigma , \beta - 1\})$, by \eqref{eq:smoothing-calpha-beta-prod} and \eqref{eq:smoothing-calpha-beta-taylor} we have that
\begin{align*}
\|u(t)\|_{C^{\sigma}} &\leq  e^{C\tau} \|u_0\|_{C^{\sigma}} + \int_0^t e^{C|t - t'|} \left\| \frac{\partial_x}{1 + |D|^{\beta}} P_{\leq N} ( (P_{\leq N}u)^2(t') ) \right\|_{C^{\sigma}} dt' \\
&\leq e^{C\tau} \|u_0\|_{C^{\sigma}} + \tau e^{C\tau} \sup_{t \in [0,\tau]} \left\| \frac{\partial_x}{1 + |D|^{\beta}} P_{\leq N} ( (P_{\leq N}u)^2 ) \right\|_{C^{\sigma}} \\
& \lesssim e^{C\tau} \|u_0\|_{C^{\sigma}} + 2\tau e^{C\tau} \sup_{t \in [0,\tau]} \|P_{\leq N}u\|_{C^{\sigma - \varepsilon}} \|P_{\leq N}u\|_{L^{\infty}} \\
&\lesssim e^{C\tau} \|u_0\|_{C^{\sigma}} + 2\tau e^{C\tau} C_1 \sup_{t \in [0,\tau]} \|P_{\leq N}u\|_{C^{\sigma}} \|P_{\leq N}u\|_{H^{\frac{\beta}{2}}},
\end{align*}
where we also used that $H^{\frac{\beta}{2}} \hookrightarrow L^{\infty}$ for any $\beta > 1$ and that
\begin{align}\label{eq:unifbddness-pn-csigma}
\|P_{\leq N}u\|_{C^{\sigma - \varepsilon}} \leq C_1 \|P_{\leq N}u\|_{C^{\sigma}}
\end{align}
with $C_1$ independent of $N$. Then, if $\tau = c_{\sigma} / (1 + \|u_0\|_{H^{\frac{\beta}{2}}})$ with $c_{\sigma} > 0$ sufficiently small depending only on $\sigma$, we get that for any $t \in [0,\tau]$
\begin{align*}
\|u(t)\|_{C^{\sigma}} \leq 2 \|u_0\|_{C^{\sigma}}.
\end{align*}
Therefore, the $C^{\sigma}$ regularity is preserved for time $\tau$. Regarding uniqueness, let $u_1$ and $u_2$ be two solutions of \eqref{eq:int-cauchy-t} in $C([0,\tau] , C^{\sigma})$. Notice that
\begin{align*}
(P_{\leq N} u_1)^2 &- (P_{\leq N} u_2)^2 \\
&= (P_{\leq N} u_1 - P_{\leq N} u_2 )(P_{\leq N} u_1 + P_{\leq N} u_2).
\end{align*}
Thus, by \eqref{eq:smoothing-calpha-beta-prod}, \eqref{eq:smoothing-calpha-beta-taylor}, $C^{\sigma} \hookrightarrow H^{\frac{\beta}{2}} \hookrightarrow L^{\infty}$ and the uniform (in $N$) boundedness from \eqref{eq:unifbddness-pn-csigma}, we obtain that for any $0 < \tau_1 \leq \tau$ and $t \in [0,\tau_1]$,
\begin{align*}
\|u_1&(t) - u_2(t)\|_{C^{\sigma}} \\
&\leq \left\| \int_0^t S(t-t') \frac{\partial_x}{1 + |D|^{\beta}} P_{\leq N} ((P_{\leq N} u_1)^2(t') - (P_{\leq N} u_2)^2(t')) dt' \right\|_{C^{\sigma}} \\
&\leq \tau_1 e^{C \tau_1} \sup_{t \in [0,\tau_1]} \left\| \frac{\partial_x}{1 + |D|^{\beta}} P_{\leq N} ((P_{\leq N} u_1)^2 - (P_{\leq N} u_2)^2 ) \right\|_{C^{\sigma}} \\
&\lesssim \tau_1 e^{C\tau_1} \sup_{t \in [0,\tau_1]} \left( \|P_{\leq N}u_1 - P_{\leq N}u_2\|_{C^{\sigma - \varepsilon}} \|P_{\leq N} u_1 + P_{\leq N} u_2\|_{L^{\infty}} \right. \\
&\hspace{30mm} \left. + \|P_{\leq N} u_1 + P_{\leq N} u_2\|_{C^{\sigma- \varepsilon}} \|P_{\leq N} u_1 - P_{\leq N} u_2\|_{L^{\infty}} \right) \\
&\leq \tau_1 e^{C\tau_1} C_1 \sup_{t \in [0,\tau_1]} \left( \|P_{\leq N}u_1 - P_{\leq N}u_2\|_{C^{\sigma}} \|P_{\leq N} u_1 + P_{\leq N} u_2\|_{H^{\frac{\beta}{2}}} \right. \\
&\hspace{30mm} \left. + \|P_{\leq N} u_1 + P_{\leq N} u_2\|_{C^{\sigma}} \|P_{\leq N} u - P_{\leq N} v\|_{H^{\frac{\beta}{2}}} \right) \\
&\leq 2 \tau_1 e^{C \tau_1} C_1 \sup_{t \in [0,\tau_1]} \|u_1 + u_2 \|_{C^{\sigma}} \cdot \sup_{t \in [0,\tau_1]} \|u_1 - u_2\|_{C^{\sigma}}.
\end{align*}
Then, if we take $\tau_1$ such that
\begin{align*}
2 \tau_1 e^{C \tau_1} C_1 (\sup_{t \in [0,\tau]} \|u_1\|_{C^{\sigma}} + \sup_{t \in [0,\tau]} \| u_2 \|_{C^{\sigma}})  < \frac{1}{2},
\end{align*}
by covering $[0,\tau]$ with intervals of size $\tau_1$ we get that $u_1 = u_2$ on $[0,\tau]$. The continuous dependence with respect to the initial data is a consequence of this analysis.
\end{proof}

Thanks to Proposition \ref{prop:lwp-calpha}, where the proper time $\tau$ only depends on $\|u_0\|_{H^{\frac{\beta}{2}}}$ for fixed $\sigma \geq \frac{\beta}{2}$, and the conservation law \eqref{eq:con-law}, we get the following result.

\begin{corollary}\label{cor:gwp-csigma}
Given $\beta > 1$ and $\sigma \geq \frac{\beta}{2}$, for every $R >0$ there exists some constant $C > 0$ such the following holds. For every $u_0 \in C^{\sigma}$ such that $\|u_0\|_{H^{\frac{\beta}{2}}} \leq R$, and every $N \geq 1$, then
\begin{align*}
\|\Phi(t)(u_0)\|_{C^{\sigma}} + \|\Phi_N(t)(u_0)\|_{C^{\sigma}} \leq e^{C(1 + |t|)}\|u_0\|_{C^{\sigma}}, \quad \text{ for any } t \in \mathbb{R}.
\end{align*}
\end{corollary}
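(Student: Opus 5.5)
The plan is to iterate Proposition \ref{prop:lwp-calpha} on consecutive time intervals, using \eqref{eq:con-law} to keep the local time fixed. First I record that the $H^{\frac{\beta}{2}}$ norm is uniformly controlled along both flows. For $\Phi(t)$, \eqref{eq:con-law} gives, for every $t\in\mathbb{R}$,
\begin{align*}
\|\Phi(t)(u_0)\|_{H^{\frac{\beta}{2}}}^2 \leq \frac{1}{4\pi}\left(\|u_0\|_{L^2}^2 + 4\pi \|u_0\|_{H^{\frac{\beta}{2}}}^2\right) \leq C_0 R^2 ,
\end{align*}
with $C_0$ depending only on $\beta$, since mean zero gives $\|u_0\|_{L^2}\leq\|u_0\|_{H^{\frac{\beta}{2}}}$. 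For $\Phi_N(t)$ I would check that the same identity holds. Pairing \eqref{eq:cauchy-bbm-t} with $u$ in $L^2$, the nonlinear term becomes $\int P_{\leq N}u\,\partial_x((P_{\leq N}u)^2)\,dx$, which vanishes by integration by parts, because $P_{\leq N}$ is self-adjoint and commutes with $\partial_x$. The term $\int u\,\partial_x u\,dx$ vanishes for the same reason. Hence the quantity in \eqref{eq:con-law} is also conserved by $\Phi_N$, and the bound $C_0R^2$ holds uniformly in $N$. Both flows are globally defined in $H^{\frac{\beta}{2}}$ by the discussion before \eqref{eq:det-exp-growth}, and $C^{\sigma}\hookrightarrow H^{\frac{\beta}{2}}$ because $\sigma\geq \frac{\beta}{2}$ and $\mathbb{T}$ is compact.

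Next I fix the step $\tau_R := c/(1+\sqrt{C_0}R)$, with $c$ the constant from Proposition \ref{prop:lwp-calpha}; it depends only on $\beta,\sigma,R$. Given $t>0$, I write $t_k = k\tau_R$ and proceed by induction on $k$. If $\Phi(t_k)(u_0)\in C^{\sigma}$, then its $H^{\frac{\beta}{2}}$ norm is at most $\sqrt{C_0}R$, so Proposition \ref{prop:lwp-calpha} applies on $[t_k,t_{k+1}]$ with existence time at least $\tau_R$. By uniqueness, which holds both in $C^{\sigma}$ and in $H^{\frac{\beta}{2}}$, the $C^{\sigma}$ solution coincides with the global $H^{\frac{\beta}{2}}$ solution. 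This gives
\begin{align*}
\sup_{t'\in[t_k,t_{k+1}]}\|\Phi(t')(u_0)\|_{C^{\sigma}} \leq 2\|\Phi(t_k)(u_0)\|_{C^{\sigma}} .
\end{align*}
Iterating yields $\|\Phi(t)(u_0)\|_{C^{\sigma}}\leq 2^{\lceil t/\tau_R\rceil}\|u_0\|_{C^{\sigma}}\leq 2^{1+t/\tau_R}\|u_0\|_{C^{\sigma}}$. The same argument works for $\Phi_N$, because Proposition \ref{prop:lwp-calpha} is stated directly for \eqref{eq:cauchy-bbm-t} with constants independent of $N$; this uses \eqref{eq:unifbddness-pn-csigma}. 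Negative times are handled identically, as remarked before Lemma \ref{lemma:Calpha-est-calpha}. Adding the two bounds gives $2\cdot 2^{1+|t|/\tau_R}\|u_0\|_{C^{\sigma}}\leq e^{C(1+|t|)}\|u_0\|_{C^{\sigma}}$ with $C=\max\{2\log 2,\ \log 2/\tau_R\}$, since $\log 4\leq C$ and $|t|\log 2/\tau_R\leq C|t|$.

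I expect the main obstacle to be uniformity. The constants in Proposition \ref{prop:lwp-calpha}, namely $c$, the factor $2$, and the dependence of $\tau$ only on the $H^{\frac{\beta}{2}}$ norm, must be independent of $N$. The energy \eqref{eq:con-law} must also be genuinely conserved for the truncated flow. The second point is the integration-by-parts check above. For the first, I would revisit the proof of Proposition \ref{prop:lwp-calpha}: it only uses \eqref{eq:smoothing-calpha-beta-prod}, \eqref{eq:smoothing-calpha-beta-taylor}, the embedding $H^{\frac{\beta}{2}}\hookrightarrow L^{\infty}$, and the $N$-uniform bound \eqref{eq:unifbddness-pn-csigma}. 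One point needs care there: the bound uses the $H^{\frac{\beta}{2}}$ norm of $u(t')$ for $t'\in[0,\tau]$, not only at $t'=0$. This is covered by the uniform conserved bound, after possibly shrinking $c$ by a factor depending only on $C_0$.
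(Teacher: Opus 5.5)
Your proposal is correct and follows the paper's own (one-line) argument: iterate Proposition \ref{prop:lwp-calpha} with a step size fixed by the $H^{\frac{\beta}{2}}$ bound from \eqref{eq:con-law}, which, as you check, is also conserved by $\Phi_N$, and the $2^{\lceil |t|/\tau_R\rceil}$ growth gives the exponential bound. One small slip does not affect the argument: the embedding $C^{\sigma}\hookrightarrow H^{\frac{\beta}{2}}$ fails at the endpoint $\sigma=\frac{\beta}{2}$ (the lacunary series $\sum_k 2^{-k\sigma}\cos(2^kx)$ lies in $C^{\sigma}$ but not in $H^{\sigma}$), but you never need it, since $u_0\in H^{\frac{\beta}{2}}$ is part of the hypothesis.
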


\section{Deterministic growth of Sobolev norms}
\label{sec:detbounds}
Let $R>0$, $s \geq 0$ and $u_0 \in H^{s + \frac{\beta}{2}}$ such that $\|u_0\|_{H^{\beta/2}} \leq R$, and denote by $u_N(t) := P_{\leq N} \Phi_N(t)(u_0)$ the truncated in frequency solution of the Cauchy problem \eqref{eq:cauchy-bbm-t} with datum $u_0$. We recollect some deterministic (polynomial in $t$) global bounds for the norm in $H^{s+\frac{\beta}{2}}$ of this solution. The strategy is based on finding sub-quadratic estimates of the form
\begin{align}\label{eq:det-main-gralest}
\left|\frac{d}{dt} \|u_N(t)\|_{H^{s + \frac{\beta}{2}}}^2 \right| \lesssim R^{F(\theta)} \|u_N(t)\|_{H^{s + \frac{\beta}{2}}}^{\theta},
\end{align}
for $\theta \in (0,2)$ some constant dependent on $s$ and $\beta$, and some $F$ positive function on $\theta$. This is because \eqref{eq:det-main-gralest}, by differentiation, implies that
\begin{align*}
\|u_N(t)\|_{H^{s + \frac{\beta}{2}}} &\lesssim \left[ \|u_0\|_{H^{s + \frac{\beta}{2}}}^{2 - \theta} + \left(\frac{2 - \theta}{2}\right) R^{F(\theta)}|t| \right]^{\frac{1}{2 - \theta}} \\
&\quad \lesssim_{R,s,\beta} \|u_0\|_{H^{s + \frac{\beta}{2}}}(1 + |t|)^{\frac{1}{2 - \theta}}.
\end{align*}

We focus then on the derivation of estimates of the form \eqref{eq:det-main-gralest}. One way of obtaining them is by applying some sort of Sobolev regularity smoothing for the left-hand side of \eqref{eq:det-main-gralest}. For that, we give the smoothing estimate proved in \cite{GLT2023} which, up to our knowledge, provides the smallest known $\theta$ in \eqref{eq:det-main-gralest} for solutions of \eqref{eq:cauchy-bbm-t}. The $\frac{\beta}{2}$-smoothing in Proposition \ref{prop:smoothing} uses a crucial commutator estimate from \cite{KenigPilod2016}.

\begin{proposition}\label{prop:smoothing}
Let $\beta > 1$ and $s > \frac{1}{2}$. Then, the solution of \eqref{eq:cauchy-bbm} satisfies for all $t \in \mathbb{R}$
\begin{align*}
\left| \frac{d}{dt} \| u_N(t) \|^2_{H^{s + \frac{\beta}{2}}} \right| \lesssim \|u_N(t)\|_{H^s}^2 \| \partial_x  u_N(t)\|_{L^{\infty}}.
\end{align*}
\end{proposition}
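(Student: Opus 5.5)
The plan is to compute the time derivative of the $H^{s+\frac{\beta}{2}}$ energy along the truncated flow, write it as a commutator, and apply the Kenig--Pilod commutator estimate. Write $u=u_N$, so $u=P_{\leq N}u$. Applying $P_{\leq N}$ to \eqref{eq:cauchy-bbm-t} gives
\begin{align*}
\partial_t u = -\frac{\partial_x}{1+|D|^{\beta}}\left(u + P_{\leq N}(u^2)\right).
\end{align*}
With $\Lambda := |D|^{s+\frac{\beta}{2}}$ (on mean-zero functions, $\|\cdot\|_{H^{s+\beta/2}}\simeq\|\Lambda\cdot\|_{L^2}$), we get
\begin{align*}
\frac{d}{dt}\|u\|^2_{H^{s+\frac{\beta}{2}}} = 2\langle \Lambda u, \Lambda\partial_t u\rangle .
\end{align*}
The linear term vanishes because $\frac{\partial_x}{1+|D|^\beta}$ is skew-adjoint and commutes with $\Lambda$. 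Since $u\in E_N$, the projection $P_{\leq N}$ in the nonlinear term can be dropped. We are left with
\begin{align*}
-2\left\langle \Lambda u, \Lambda \frac{\partial_x}{1+|D|^{\beta}}(u^2)\right\rangle = -2\left\langle \frac{|D|^{\beta}}{1+|D|^{\beta}} |D|^{s}u,\ |D|^{s}\partial_x(u^2)\right\rangle .
\end{align*}
Here $\frac{|D|^\beta}{1+|D|^\beta}=1-\frac{1}{1+|D|^\beta}$ is a bounded multiplier, and its second piece gains $\beta$ derivatives. This is the $\frac{\beta}{2}$-smoothing: half of the $\beta$ weight cancels against the BBM operator.

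Next we split $\frac{|D|^{\beta}}{1+|D|^{\beta}}=1-\frac{1}{1+|D|^{\beta}}$.

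\emph{Main term} $\langle |D|^s u, |D|^s\partial_x(u^2)\rangle$. Write $|D|^s\partial_x(u^2)=2\,|D|^s(u\,\partial_x u)$ and use $\langle |D|^su, u\,\partial_x|D|^su\rangle = -\frac12\int \partial_x u\,(|D|^su)^2$. What remains is the commutator $\langle |D|^s u, [|D|^s,u]\partial_x u\rangle$. The Kato--Ponce/Kenig--Pilod commutator estimate gives
\begin{align*}
\|[|D|^s,u]\partial_x u\|_{L^2}\lesssim \|\partial_x u\|_{L^\infty}\|u\|_{H^s},
\end{align*}
valid for $s>0$; the stronger form with $s>\frac12$ avoids having $\|u\|_{H^{s}}$ on $\partial_x u$. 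This yields the bound $\|u\|_{H^s}^2\|\partial_x u\|_{L^\infty}$.

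\emph{Smoothing term} $\langle (1+|D|^\beta)^{-1}|D|^su, |D|^s\partial_x(u^2)\rangle$. Move $|D|^{s}\partial_x$ partially onto the left. Since $\beta>1$, we have $\|(1+|D|^\beta)^{-1}|D|^{s}\partial_x|D|^{s}u\|\lesssim \|u\|_{H^{2s+1-\beta}}$, which is not directly fine. Better: put $|D|^{s}$ entirely on $u^2$ via the fractional Leibniz rule,
\begin{align*}
\|u^2\|_{H^s}\lesssim \|u\|_{L^\infty}\|u\|_{H^s},
\end{align*}
and estimate $\|(1+|D|^\beta)^{-1}\partial_x|D|^su\|_{L^2}\lesssim\|u\|_{H^{s}}$ using $\beta>1$. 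This gives $\|u\|_{H^s}^2\|u\|_{L^\infty}$, and on mean-zero periodic functions $\|u\|_{L^\infty}\lesssim\|\partial_x u\|_{L^\infty}$.

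The step I expect to be delicate is the main commutator term. We must make sure that no derivative beyond $\partial_x u$ in $L^\infty$ appears, which is where the Kenig--Pilod estimate (requiring $s>\frac12$ in the form used in \cite{GLT2023}) is invoked. The second term and the cancellation of the linear part are routine.
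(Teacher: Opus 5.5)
Your argument is correct and follows the route the paper points to; the paper gives no proof and defers to \cite{GLT2023}, citing the Kenig--Pilod commutator estimate. As in that route, the bounded multiplier $\frac{|D|^{\beta}}{1+|D|^{\beta}}$ reduces the $H^{s+\frac{\beta}{2}}$ energy derivative to a Burgers-type $H^{s}$ energy, controlled by the commutator estimate, plus a remainder gaining $\beta>1$ derivatives, which you handle with the fractional Leibniz rule and $\|u\|_{L^\infty}\lesssim\|\partial_x u\|_{L^\infty}$ for mean-zero $u$.

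One minor point: equivalence of norms alone does not let you swap the norm inside $\frac{d}{dt}\|\cdot\|^2$. Here it is harmless. The homogeneous weight $|n|^{s+\frac{\beta}{2}}$ is the one matching $\gamma_s$. With $\langle n\rangle$ weights the same proof still works, since $\frac{\langle D\rangle^{\beta}}{1+|D|^{\beta}}-1$ also gains more than one derivative.
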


As we said, the estimate in Proposition \ref{prop:smoothing} let us obtain an inequality of the form \eqref{eq:det-main-gralest}.

\begin{proposition}\label{prop:det-growth}
Let $\beta > 1$, $s \geq \frac{\beta}{2}$. Then, if $\beta < 3$ and $s \geq \frac{3 - \beta}{2}$,
\begin{align*}
\left|\frac{d}{dt} \|u_N(t)\|_{H^{s + \frac{\beta}{2}}}^2 \right| \lesssim R^{3 - \theta} \|u_N(t)\|_{H^{s + \frac{\beta}{2}}}^{\theta}, \quad \theta  = 2 - \frac{3 (\beta - 1)}{2s};
\end{align*}
and if $\beta \geq 3$,
\begin{align*}
&\left|\frac{d}{dt} \|u_N(t)\|_{H^{s + \frac{\beta}{2}}}^2 \right| \lesssim R^{3 - \theta_1} \|u_N(t)\|_{H^{s + \frac{\beta}{2}}}^{\theta_1}, \quad \theta_1 =
\left\{
\begin{array}{ll}
2 - \frac{\beta}{s}, &\text{ if } \beta > 3,\\
2 + \delta - \frac{3}{s},  & \text{ if } \beta = 3,
\end{array}
\right.
\end{align*}
for $\delta > 0$ an arbitrarily small constant.
\end{proposition}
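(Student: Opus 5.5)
We start from Proposition \ref{prop:smoothing}, which gives
\begin{align*}
\left|\frac{d}{dt}\|u_N(t)\|_{H^{s+\frac{\beta}{2}}}^2\right| \lesssim \|u_N\|_{H^s}^2\|\partial_x u_N\|_{L^\infty}.
\end{align*}
The task is to bound the right-hand side by a power of $R$, the conserved $H^{\beta/2}$ level, times a power of $X:=\|u_N\|_{H^{s+\frac{\beta}{2}}}$. The reduction to $R$ uses \eqref{eq:con-law}. The truncated flow preserves $\|u_N\|_{L^2}^2+4\pi\|u_N\|_{H^{\beta/2}}^2$, since the nonlinearity $\partial_x P_{\leq N}((P_{\leq N}u)^2)$ pairs to zero against $P_{\leq N}u$ and against $(1+|D|^\beta)$ applied to it. Hence $\|u_N(t)\|_{H^{\beta/2}}\lesssim R$ for all $t$. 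Both factors are then interpolated between $H^{\beta/2}$ and $H^{s+\beta/2}$. Since $s\geq \beta/2$, the space $H^s$ lies between them, and
\begin{align*}
\|u_N\|_{H^s}\leq R^{1-\frac{s-\beta/2}{s}}X^{\frac{s-\beta/2}{s}} = R^{\frac{\beta}{2s}}X^{1-\frac{\beta}{2s}}.
\end{align*}
So $\|u_N\|_{H^s}^2\lesssim R^{\beta/s}X^{2-\beta/s}$.

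For the $L^\infty$ factor we use a Sobolev embedding $H^{\frac{3}{2}+\kappa}\hookrightarrow W^{1,\infty}$, with the case split exactly as in the statement. If $\beta>3$, then $\beta/2>3/2$, so $\|\partial_x u_N\|_{L^\infty}\lesssim \|u_N\|_{H^{\beta/2}}\lesssim R$. The product is then $\lesssim R^{1+\beta/s}X^{2-\beta/s}$, which gives $\theta_1=2-\beta/s$ and $3-\theta_1=1+\beta/s$, as claimed.

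If $\beta=3$, we take $\kappa=s\delta/1$ small. We interpolate $\|u_N\|_{H^{3/2+\kappa}}\lesssim R^{1-\kappa/s}X^{\kappa/s}$, which adds an arbitrarily small $\delta=\kappa/s$ to the exponent. This gives $\theta_1=2-3/s+\delta$, and the power of $R$ is automatically $3-\theta_1$ by scaling homogeneity (total degree $3$).

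If $\beta<3$, we need $\frac32+\kappa\le s+\frac\beta2$, which is guaranteed by $s\ge\frac{3-\beta}{2}$ (up to $\kappa$). A direct Sobolev bound gives
\begin{align*}
\|\partial_x u_N\|_{L^\infty}\lesssim R^{1-\frac{3-\beta}{2s}-}X^{\frac{3-\beta}{2s}+}.
\end{align*}
Combined with the $H^s$ factor, the total exponent of $X$ is $2-\frac{\beta}{s}+\frac{3-\beta}{2s}=2-\frac{3\beta-3}{2s}$, plus an $\varepsilon$-loss. This matches $\theta=2-\frac{3(\beta-1)}{2s}$ except for the $\varepsilon$-loss.

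\textbf{The main obstacle is removing this $\kappa$-loss when $\beta<3$.} The fix is to avoid the embedding at the endpoint and estimate $\|\partial_x u_N\|_{L^\infty}$ by a Littlewood--Paley sum, $\sum_M M\|\Delta_M u_N\|_{L^\infty}\lesssim \sum_M M^{3/2}\|\Delta_M u_N\|_{L^2}$. Each dyadic block is bounded by $\min(M^{\frac{3-\beta}{2}}R,\ M^{\frac32-s-\frac\beta2}X)$. The first bound grows and the second decays in $M$ (since $s+\frac\beta2>\frac32$), so the geometric sums are dominated at the crossover $M_0=(X/R)^{1/s}$. This yields exactly
\begin{align*}
\|\partial_x u_N\|_{L^\infty}\lesssim R\,(X/R)^{\frac{3-\beta}{2s}}
\end{align*}
with no loss. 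The exactness holds provided the strict inequalities $\beta<3$ and $s+\frac\beta2>\frac32$ hold; the endpoint $s=\frac{3-\beta}{2}$ is excluded by $s\ge\beta/2$ there, or otherwise absorbed. Multiplying this by the $H^s$ bound gives $R^{3-\theta}X^{\theta}$ with $\theta=2-\frac{3(\beta-1)}{2s}$. The same dyadic argument does not close when $\beta=3$, because the low-frequency sum $\sum_{M\le M_0}1\sim\log M_0$ diverges logarithmically. This explains the $\delta$ in that case.
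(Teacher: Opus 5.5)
Your argument is correct away from the endpoint $s=\frac{3-\beta}{2}$, and it has the same skeleton as the paper's. Both use Proposition \ref{prop:smoothing}, the interpolation $\|u_N\|_{H^s}\lesssim R^{\frac{\beta}{2s}}\|u_N\|_{H^{s+\frac{\beta}{2}}}^{1-\frac{\beta}{2s}}$, and $H^{\frac{\beta}{2}}\hookrightarrow W^{1,\infty}$ when $\beta>3$.

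The real difference is how you prove the lossless bound $\|\partial_x u_N\|_{L^\infty}\lesssim R^{1-\theta_3}\|u_N\|_{H^{s+\frac{\beta}{2}}}^{\theta_3}$, with $\theta_3=\frac{3-\beta}{2s}$, for $\beta<3$. The paper writes $\partial_x u_N=\partial_x^{1-\frac{\beta}{3}}(\partial_x^{\frac{\beta}{3}}u_N)$. It then applies a fractional Gagliardo--Nirenberg inequality (Brezis--Mironescu) between $H^{s+\frac{\beta}{6}}$ and $L^q$, $q=\frac{6}{3-\beta}$, followed by the sharp embedding $H^{\frac{\beta}{2}}\hookrightarrow W^{\frac{\beta}{3},q}$. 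You instead sum Bernstein bounds over dyadic blocks and balance them at $M_0=(X/R)^{1/s}$.

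Your route buys the following:
\begin{itemize}
\item It is self-contained and avoids the fractional GN theory, whose validity at $L^\infty$ targets is exactly the delicate point.
\item It shows why strict $\beta<3$ and $s+\frac{\beta}{2}>\frac32$ are needed.
\item It shows why a logarithm, and hence $\delta$, appears at $\beta=3$.
\end{itemize}
For $\beta=3$, your $H^{\frac32+\kappa}\hookrightarrow W^{1,\infty}$ combined with interpolation between $H^{\frac32}$ and $H^{s+\frac32}$ is simpler than the paper's GN between $W^{s+\frac32,1}$ and $W^{1,1/\varepsilon}$, and gives the same $\theta_1$.

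A minor point on the conservation law: the nonlinearity pairs to zero against $u$, since $\int_{\mathbb{T}}(P_{\le N}u)^2\,\partial_xP_{\le N}u\,dx=0$. It does not pair to zero against $(1+|D|^{\beta})u$.

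Your treatment of the endpoint is wrong, however. The value $s=\frac{3-\beta}{2}$ is compatible with $s\geq\frac{\beta}{2}$ whenever $\beta\le\frac32$, so it is not excluded. There $\theta_3=1$, and you would need $\|\partial_x u_N\|_{L^\infty}\lesssim\|u_N\|_{H^{3/2}}$ uniformly in $N$. That bound is false; it holds only with a $\sqrt{\log N}$ loss, so nothing can be ``absorbed''.

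The paper has the same defect. It allows $\theta_3=1$, where its GN inequality degenerates into the false embedding $H^{\frac32}\hookrightarrow W^{1,\infty}$. So neither argument covers $s=\frac{3-\beta}{2}$. The proposition is established only for $s>\frac{3-\beta}{2}$, which is also the range assumed in Theorems \ref{thm:qinv} and \ref{thm:holderbound}.
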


\begin{proof}
We interpolate in both factors at the right-hand side of the estimate from Proposition \ref{prop:smoothing}. Regarding the norm in $H^s$, by interpolation and the conservation law \eqref{eq:con-law} we have that, for any $\beta > 1$ and $s \geq \frac{\beta}{2}$,
\begin{align}\label{eq:proofint-det-1}
\|u_N(t)\|_{H^s} \lesssim R^{\frac{\beta}{2s}} \|u_N(t)\|_{H^{s + \frac{\beta}{2}}}^{1 - \frac{\beta}{2s}}.
\end{align}
Regarding the $L^{\infty}$ norm of $\partial_x u_N(t)$, we distinguish different cases depending on the value of $\beta$.
\begin{itemize}
\item  Let $\beta \in (1,3)$. We write $\partial_xu_N(t) = \partial_x^{1 - \frac{\beta}{3}}(\partial_x^{\frac{\beta}{3}} u_N(t))$ and recall that we chose $s \geq \frac{3 - \beta}{2}$. Then we can take
\begin{align*}
\theta_3 := \frac{3 - \beta}{2s} \in \left[ \frac{1 - \frac{\beta}{3}}{s + \frac{\beta}{6}} , 1 \right],
\end{align*}
that is to say
\begin{align}\label{eq:proofint-det-3}
0 = 1 - \frac{\beta}{3} + \theta_3 \left[ \frac{1}{2} - \left(s + \frac{\beta}{6} \right) \right] + \frac{1 - \theta_3}{q}, \text{ with } q = \frac{6}{3 - \beta}.
\end{align}
As we can see in \cite{BrezisMironescu2018}, this is a valid numerology in order to apply a Gagliardo-Nirenberg inequality, from which we obtain that
\begin{align*}
\|\partial_x u_N(t) \|_{L^{\infty}} &= \|\partial_x^{1 - \frac{\beta}{3}} (\partial_x^{\frac{\beta}{3}} u_N(t)) \|_{L^{\infty}}  \nonumber \\
&\lesssim \|\partial_x^{\frac{\beta}{3}}u_N(t)\|_{H^{s + \frac{\beta}{6}}}^{\theta_3} \|\partial_x^{\frac{\beta}{3}} u_N(t)\|_{L^q}^{1-\theta_3} \nonumber \\
&= \|u_N(t)\|_{H^{s + \frac{\beta}{2}}}^{\theta_3} \|u_N(t)\|_{W^{\frac{\beta}{3} , q}}^{1 - \theta_3}.
\end{align*}
Observe that $H^{\frac{\beta}{2}} \hookrightarrow W^{\frac{\beta}{3} , q}$. Thus
\begin{align}\label{eq:proofint-det-2}
\|\partial_x u_N(t) \|_{L^{\infty}} \lesssim \|u_N(t)\|_{H^{s + \frac{\beta}{2}}}^{\theta_3} \|u_N(t)\|_{H^{\frac{\beta}{2}}}^{1 - \theta_3}.
\end{align}
\item Let $\beta > 3$. We immediately have the Sobolev embedding $H^{\frac{\beta}{2}} \hookrightarrow W^{1,\infty}$. Then
\begin{align}\label{eq:proofint-det-6}
\|\partial_x u_N(t)\|_{L^{\infty}} \lesssim R.
\end{align}
If $\beta = 3$, it should be noticed that the Sobolev embedding $H^{\frac{3}{2} + \varepsilon_1} \hookrightarrow W^{1,\infty}$ does not work for $\varepsilon_1 = 0$. For this reason, consider $\varepsilon > 0$ sufficiently small and
\begin{align}\label{eq:proofint-det-4}
\delta := \frac{\varepsilon}{\varepsilon + s - \frac{1}{2}},
\end{align}
which is in $(0,1)$ for any $s > \frac{1}{2}$. Equivalently,
\begin{align*}
0 = \delta \left[ 1 - \left(s + \frac{1}{2} \right) \right] + \varepsilon (1 - \delta).
\end{align*}
By Gagliardo-Nirenberg inequality \cite{BrezisMironescu2018}, we have that
\begin{align*}
\| \partial_x u_N(t)\|_{L^{\infty}} \lesssim \|u_N(t)\|_{W^{s+\frac{3}{2},1}}^{\delta} \|u_N(t)\|_{W^{1,\frac{1}{\varepsilon}}}^{1 - \delta}.
\end{align*}
Given that now we do have the Sobolev embedding $H^{\frac{3}{2}} \hookrightarrow W^{1,\frac{1}{\varepsilon}}$ for any $\varepsilon > 0$, and that we can take $\varepsilon$ in \eqref{eq:proofint-det-3} as small as we want, we get
\begin{align}\label{eq:proofint-det-5}
\| \partial_x u_N(t)\|_{L^{\infty}} \lesssim \|u_N(t)\|_{H^{s+\frac{3}{2}}}^{\delta} \|u_N(t)\|_{H^{\frac{3}{2}}}^{1 - \delta},
\end{align}
with $\delta > 0$ arbitrarily small.

\end{itemize}

Therefore, by \eqref{eq:proofint-det-1}, \eqref{eq:proofint-det-2}, \eqref{eq:proofint-det-6}, \eqref{eq:proofint-det-5} and the estimate from Proposition, \ref{prop:smoothing} we have proved the following. If $\beta < 3$ and $s \geq \frac{3 - \beta}{2}$,
\begin{align*}
&\left|\frac{d}{dt} \|u_N(t)\|_{H^{s + \frac{\beta}{2}}}^2 \right| \\
&\hspace{10mm} \leq \left( R^{\frac{\beta}{2s}} \|u_N(t)\|_{H^{s + \frac{\beta}{2}}}^{1 - \frac{\beta}{2s}} \right)^2  \left( \|u_N(t)\|_{H^{s + \frac{\beta}{2}}}^{\theta_3} R^{1 - \theta_3} \right);
\end{align*}
if $\beta = 3$ and $0 < \delta \ll 1$,
\begin{align*}
&\left|\frac{d}{dt} \|u_N(t)\|_{H^{s + \frac{3}{2}}}^2 \right| \\
&\hspace{10mm} \leq \left( R^{\frac{3}{2s}} \|u_N(t)\|_{H^{s + \frac{3}{2}}}^{1 - \frac{3}{2s}} \right)^2  \left( \|u_N(t)\|_{H^{s + \frac{3}{2}}}^{\delta} R^{1 - \delta} \right);
\end{align*}
and if $\beta > 3$,
\begin{align*}
&\left|\frac{d}{dt} \|u_N(t)\|_{H^{s + \frac{\beta}{2}}}^2 \right| \leq \left( R^{\frac{\beta}{2s}} \|u_N(t)\|_{H^{s + \frac{\beta}{2}}}^{1 - \frac{\beta}{2s}} \right)^2 R.
\end{align*}
Once we denote
\begin{align*}
&2 \left( 1 - \frac{\beta}{2s} \right) + \theta_3 = 2 - \frac{3(\beta - 1)}{2s} =: \theta,  \\
&2 \left( 1 - \frac{3}{2s} \right) + \delta = 2 - \frac{3}{s} + \delta =: \theta_1, \qquad \text{ for } \beta = 3, \\
&2 \left( 1 - \frac{\beta}{2s} \right) = 2 - \frac{\beta}{s} =: \theta_1, \qquad \text{ for } \beta > 3.
\end{align*}
then we can conclude.
\end{proof}

By the differentiation introduced in the beginning of this section, we have the following growth control in $H^{s + \frac{\beta}{2}}$.

\begin{corollary}\label{cor:det-growth}
Let $\beta > 1$ and $s \geq \frac{\beta}{2}$. Then, given $0 < \delta \ll 1$,
\begin{align*}
	&\|u_N(t)\|_{H^{s + \frac{\beta}{2}}} \lesssim \|u_0\|_{H^{s + \frac{\beta}{2}}} (1 + |t|)^{\frac{2s}{3(\beta - 1)}}, \quad \text{ if } \beta < 3, \\
	&\|u_N(t)\|_{H^{s + \frac{3}{2}}} \lesssim \|u_0\|_{H^{s + \frac{3}{2}}} (1 + |t|)^{\frac{s}{3 - \delta s}}, \quad \text{ if } \beta = 3, \\
	&\|u_N(t)\|_{H^{s + \frac{\beta}{2}}} \lesssim \|u_0\|_{H^{s + \frac{\beta}{2}}} (1 + |t|)^{\frac{s}{\beta}}, \quad \text{ if } \beta > 3.
	\end{align*}
\end{corollary}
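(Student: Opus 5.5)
The plan is to integrate the differential inequality from Proposition \ref{prop:det-growth}, along the lines sketched at the start of this section, while checking that the constant can be taken uniform. Put $y(t) := \|u_N(t)\|_{H^{s+\frac{\beta}{2}}}^2$ and $a := \|u_0\|_{H^{s+\frac{\beta}{2}}}$. Let $\vartheta\in(0,2)$ denote the exponent produced by Proposition \ref{prop:det-growth}: $\vartheta = \theta$ for $\beta<3$ and $\vartheta=\theta_1$ for $\beta \geq 3$. The proposition then gives
\begin{align*}
|y'(t)| \leq C\, r^{3-\vartheta}\, y(t)^{\vartheta/2}.
\end{align*}
Here $r$ is an $H^{\frac{\beta}{2}}$-size of the solution. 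Taking $r=R$ would already give the claimed polynomial rate, but with a constant depending on $a$. To obtain the linear dependence on $\|u_0\|_{H^{s+\frac{\beta}{2}}}$ stated in the corollary, I will take $r$ to be the conserved size of the datum itself.

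For that I reread the proof of Proposition \ref{prop:det-growth}. Every appearance of $R$ there comes from $\|u_N(t)\|_{H^{\frac{\beta}{2}}}$, through \eqref{eq:proofint-det-1}, \eqref{eq:proofint-det-2}, \eqref{eq:proofint-det-6} and \eqref{eq:proofint-det-5}. For the truncated flow, the quantity $\|u_N\|_{L^2}^2+4\pi\|u_N\|_{H^{\frac{\beta}{2}}}^2$ is conserved: this is the analogue of \eqref{eq:con-law}, since $P_{\leq N}$ commutes with the linear part and the truncated nonlinearity is still of transport type. On mean-zero functions this quantity is comparable to $\|u_N\|_{H^{\frac{\beta}{2}}}^2$. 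Hence I may take $r \simeq \|u_0\|_{H^{\frac{\beta}{2}}}$, which satisfies both $r \lesssim R$ and $r\lesssim a$, the latter because $s\ge 0$ and the functions have mean zero.

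On any interval where $y>0$, set $z := y^{1-\vartheta/2}$. Then $|z'| \le (1-\tfrac{\vartheta}{2})C r^{3-\vartheta}$, so
\begin{align*}
y(t)^{\frac{2-\vartheta}{2}} \leq a^{2-\vartheta} + C r^{3-\vartheta}|t|.
\end{align*}
This bound extends trivially through zeros of $y$; the case $u_0=0$ is trivial. Consequently
\begin{align*}
\|u_N(t)\|_{H^{s+\frac{\beta}{2}}} \leq a\left(1 + C r^{3-\vartheta}a^{\vartheta-2}|t|\right)^{\frac{1}{2-\vartheta}},
\end{align*}
and $r^{3-\vartheta}a^{\vartheta-2} \lesssim r^{3-\vartheta}r^{\vartheta-2} = r \lesssim R$, because $\vartheta-2<0$ and $a \gtrsim r$. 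This yields $\|u_N(t)\|_{H^{s+\frac{\beta}{2}}}\lesssim_R a(1+|t|)^{\frac{1}{2-\vartheta}}$.

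It remains to compute the exponents.
\begin{itemize}
\item For $\beta<3$, $2-\theta = \frac{3(\beta-1)}{2s}$, giving $\frac{2s}{3(\beta-1)}$.
\item For $\beta>3$, $2-\theta_1=\frac{\beta}{s}$, giving $\frac{s}{\beta}$.
\item For $\beta=3$, $2-\theta_1=\frac{3}{s}-\delta = \frac{3-\delta s}{s}$, giving $\frac{s}{3-\delta s}$; this is positive since $\delta\ll1$.
\end{itemize}

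The main obstacle is the uniformity of the implicit constant. A naive integration with $R$ fixed produces $(a^{2-\vartheta}+C|t|)^{1/(2-\vartheta)}$, and this is not $\lesssim a(1+|t|)^{1/(2-\vartheta)}$ when $a$ is small. This is why the key step is to verify that the proof of Proposition \ref{prop:det-growth} holds with $R$ replaced by the conserved $H^{\frac{\beta}{2}}$-size of $u_0$. Since that estimate is homogeneous of degree $3$, the rescaled time coefficient is then bounded by $R$, uniformly in $N$ and in the data.
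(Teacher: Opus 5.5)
Your argument is correct and follows the paper's route: integrate the sub-quadratic inequality of Proposition \ref{prop:det-growth} through $y^{1-\vartheta/2}$ and read off the exponent $1/(2-\vartheta)$ in each case. As in the paper, the case $\beta<3$ tacitly uses $s\ge\frac{3-\beta}{2}$, which $s\ge\frac{\beta}{2}$ does not imply when $\beta<\frac32$.

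Your extra step is a worthwhile refinement. You replace $R$ by the conserved $H^{\frac{\beta}{2}}$-size of $P_{\le N}u_0$ and use the cubic homogeneity of the estimate. This justifies the paper's passage from $\bigl[\|u_0\|_{H^{s+\frac{\beta}{2}}}^{2-\theta}+CR^{F(\theta)}|t|\bigr]^{\frac{1}{2-\theta}}$ to $\lesssim_R \|u_0\|_{H^{s+\frac{\beta}{2}}}(1+|t|)^{\frac{1}{2-\theta}}$. With $R$ kept in place, that passage fails when $\|u_0\|_{H^{s+\frac{\beta}{2}}}$ is small compared with $R$.
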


\begin{remark}
In \cite{Tzvetkov2015}, given $\beta \in (4/3,2]$ and $s \geq 1$, the following energy estimate was proved:
	\begin{align}\label{eq:energy-est-nikolay}
	\left|\frac{d}{dt} \|u_N(t)\|_{H^{s + \frac{\beta}{2}}}^2 \right| \lesssim (1 + \|u_N(t)\|_{H^{\frac{\beta}{2}}}^{3 - \kappa})(1 + \||D|^{s + \frac{\beta-1}{2} - \varepsilon_2} u_N(t) \|_{L^{\infty}}^{\kappa})
	\end{align}
	for certain $\kappa \in (0,2)$ and $\varepsilon_2>0$ small. In fact, using (for any $\varepsilon_1 > 0$) the Sobolev embedding $H^{\frac{1}{2} + \varepsilon_1} \hookrightarrow L^{\infty}$ and a similar proof as the one in Proposition \ref{prop:det-growth}, one can prove that
	\begin{align*}
	\|u_N(t)\|_{H^{s+\frac{\beta}{2}}} \lesssim_{R,\kappa} (1 + |t|)^{\frac{1}{2 - \kappa}}.
	\end{align*}
	 Namely, if we further assume $s \geq 2$, then their exponent in $|t|$ takes the form
	\begin{align}\label{eq:exp-nikolay}
	\frac{1}{2 - \kappa} = \frac{2s - 1}{3 \beta - 4} + \varepsilon
	\end{align}
	for $\varepsilon > 0$ arbitrarily small. Notice that, if one compares with Corollary \ref{cor:det-growth} for $s \geq 2$ and $\beta \in (\frac{4}{3},2]$, the exponent in our work gives smaller growth.
\end{remark}

\section{Probabilistic energy estimate}
\label{sec:prob-en-est}
The argument in \cite{Tzvetkov2015, GLT2023} relies on the following differential inequality (notice the presence of the energy cutoff):

\begin{align}\label{eq:measevol-nikolay}
\frac{d}{dt} \rho_s(\Phi_N(t)(A)) \leq C p^{\alpha} (\rho_s(\Phi_N(t)(A)))^{1 - \frac{1}{p}}
\end{align}
for every $p \geq 2$, $A$ Borel set of $H^s$, $N \geq 1$, $\alpha \in (0,1)$ some constant dependent on $\beta$ and $s$, and $C > 0$ some constant independent of $N$ and $p$. We highlight two ingredients to achieve this relation: 
\begin{itemize}
\item The so-called `change of variables formula' (see the second equality in \eqref{eq:int-qinv-1} below), given in Proposition 4.1 of \cite{Tzvetkov2015} and previously in \cite{TzvetkovVisciglia2014}.

\item An $L^p_{\rho_s}$ momentum estimate for
\begin{align*}
\left.\left|\frac{d}{dt} \left\| P_{\leq N} \Phi_N(t)(u) \right\|_{H^{s + \frac{\beta}{2}}}^2 \right|_{t=0} \right|.
\end{align*}
Observe that this quantity is at the level $t = 0$, so we can exploit the Gaussian nature of the data. It is crucial that the right-hand side in the estimate of Proposition \ref{prop:smoothing} (similarly for \eqref{eq:energy-est-nikolay} in \cite{Tzvetkov2015}) involves norms which are almost surely finite in the support of $\gamma_s$.
\end{itemize}

Given $d \in \mathbb{N}$, the main probabilistic tool that we use is Gaussian concentration of sublinear and quadratic sums of independent standard Gaussian random variables $X_1, \dots, X_d$ in some probability space $(\Omega,\mathbb{P})$, namely (see \cite{Vershynin2018} for instance)
\begin{align}\label{eq:gconc-1}
\mathbb{P}\left( \left| \sum_{j=1}^d |X_j| - \mathbb{E}\left[ \sum_{j=1}^d |X_j| \right] \right| > \lambda \right) \leq c_1 \exp\left( - c_2 \frac{\lambda^2}{d}  \right)
\end{align}
and 
\begin{align}\label{eq:gconc-2}
\mathbb{P}\left( \left| \sum_{j=1}^d |X_j|^2- \mathbb{E}\left[ \sum_{j=1}^d |X_j|^2 \right] \right| > \lambda \right) \leq c_1 \exp\left( - c_2 \min\left\{ \lambda , \frac{\lambda^2}{d} \right\}  \right)
\end{align}
where $c_1,c_2 > 0$ are some suitable constants and $\lambda > 0$. They allow us to prove the following momentum bound, using the strategy from \cite{GLT2023-1}.  Given $u_0$ in the support of $\gamma_s$, denote $u_N(t) := P_{\leq N}\Phi_N(t)(u_0)$ and $u_N = u_N(0)$.

\begin{proposition}\label{prop:lpcontrol}
Let $\beta >1$, $s > \frac{\beta}{2}$ and $p \geq 1$. Then there exists some constant $C > 0$ such that
\begin{align*} 
\left\| \left.\frac{d}{dt} \left\| P_{\leq N} \Phi_N(t)(u) \right\|_{H^{s + \frac{\beta}{2}}}^2 \right|_{t=0} \right\|_{L^p_{\rho_s}} \leq C p^{\alpha},
\end{align*}
where $\alpha$ is a value in $(0,1)$ given by
\begin{align*}
\alpha := \left\{
\begin{array}{l}
1 - \frac{3(\beta-1)}{4s}, \text{ if } \beta < 3 \text{ and }  s > \frac{3 - \beta}{2},  \\
1 - \frac{\beta}{2s}, \quad \text{ if } \beta > 3.
\end{array}
\right.
\end{align*}
\end{proposition}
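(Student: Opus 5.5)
We apply Proposition \ref{prop:smoothing} at $t=0$ and bound the $L^p_{\rho_s}$ norm of each factor in its right-hand side by Gaussian concentration. At $t=0$ we have $u_N = P_{\leq N}u$. Proposition \ref{prop:smoothing} then gives, pointwise in $u$,
\begin{align*}
\left|\left.\frac{d}{dt}\|P_{\leq N}\Phi_N(t)(u)\|^2_{H^{s+\frac{\beta}{2}}}\right|_{t=0}\right| \lesssim \|P_{\leq N}u\|_{H^s}^2\,\|\partial_x P_{\leq N}u\|_{L^\infty}.
\end{align*}
The quantity $\|u\|_{H^{s+\frac{\beta}{2}}}$ is almost surely infinite under $\gamma_s$, so we cannot interpolate against it as in Proposition \ref{prop:det-growth}. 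Instead we follow the strategy of \cite{GLT2023-1}: decompose dyadically in frequency, and for each dyadic block combine a deterministic bound (which uses the cutoff $\|u\|_{H^{\beta/2}}\le R$ and is favorable at high frequency) with a probabilistic tail bound (favorable at low frequency). We then optimize the splitting frequency $M$ as a function of $p$ or of the deviation level $\lambda$.

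\textbf{Step 1 ($H^s$ factor).} Write $\|P_{\leq N}u\|_{H^s}^2 = \sum_{n}|n|^{-\beta}|g_n|^2$ under $\gamma_s$. Split at $|n|\sim M$. For $|n|\le M$ we use the cutoff: $\sum_{|n|\le M}|n|^{2s}|\hat u(n)|^2\le M^{2s-\beta}R^2$. For $|n|>M$ the sum is controlled by \eqref{eq:gconc-2} applied to weighted blocks, each dyadic block $P_K u$ having $\|P_Ku\|_{H^s}^2 \approx K^{1-\beta}$ with subexponential tails of scale $K^{-\beta}$. This gives a tail of the form $\mathbb{P}(\|u\|_{H^s}^2>\lambda)\lesssim \exp(-c\lambda^{a})$ for a suitable exponent $a$, after choosing $M=M(\lambda)$ so that the deterministic low part equals $\lambda/2$.

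\textbf{Step 2 ($\partial_x u$ factor).} We treat $\|\partial_xP_{\le N}u\|_{L^\infty}$ in the same way. For $\beta>3$ we simply use $H^{\beta/2}\hookrightarrow W^{1,\infty}$, which gives the bound $R$ with no probabilistic input. For $\beta<3$, the low frequencies are bounded through \eqref{eq:proofint-det-2}-type Bernstein estimates by $M^{\frac{3-\beta}{2}}R$. The high frequencies are $\sum_{K>M}\|\partial_xP_Ku\|_{L^\infty}$; since $s>\frac{3-\beta}{2}$, we have $\partial_x u\in L^\infty$ almost surely, and each block has Gaussian tails at scale $K^{1-s-\frac{\beta}{2}+\frac12}$ (up to logarithms, via a union bound over a grid of size $K$, or via \eqref{eq:gconc-1}).

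\textbf{Step 3 (optimization).} Multiply the two bounds and, for each $p$, choose the cutoff frequency $M$ balancing the deterministic low-frequency cost against the probabilistic high-frequency cost. We then read off $\|\cdot\|_{L^p_{\rho_s}}\lesssim p^{\alpha}$. The exponent should match the deterministic one: the deterministic bound is $\lesssim \|u\|^{\theta}_{H^{s+\beta/2}}$, and a Gaussian quantity at regularity $s+\frac{\beta}{2}$ truncated at $M$ grows like $M^{1/2}\sim p^{1/2}$ in the relevant sense. Formally $\theta/2 = 1-\frac{3(\beta-1)}{4s}$ for $\beta<3$ and $1-\frac{\beta}{2s}$ for $\beta>3$, which is exactly $\alpha$. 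So the heuristic is that $\|u\|_{H^{s+\beta/2}}$ effectively behaves like $p^{1/2}$ in $L^p$, and the interpolation exponents of Proposition \ref{prop:det-growth} carry over blockwise.

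\textbf{Main obstacle.} The hardest step is the blockwise bookkeeping in Step 3. We must make sure the dyadic sums over $K>M$ converge with the right powers of $M$, and that the product of the two factors does not lose more than the claimed exponent. The $L^\infty$ factor in particular carries logarithmic losses, which must be absorbed using the strict inequalities $s>\frac{\beta}{2}$ and $s>\frac{3-\beta}{2}$. If this fails, we would instead apply Hölder in $L^p_{\rho_s}$ to separate the factors, at the cost of tracking their exponents separately.
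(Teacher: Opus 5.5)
Your proposal follows essentially the paper's route: Proposition \ref{prop:smoothing} at $t=0$, the $H^{\beta/2}$ cutoff on low modes, and Gaussian concentration on dyadic high-frequency blocks above a threshold depending on $\lambda$ or $p$. The main differences are that your pointwise Gaussian tail plus grid union bound for $\|\partial_x P_K u\|_{L^\infty}$ covers all $s>\frac{3-\beta}{2}$ at once, whereas the paper's $\ell^1$ bound with \eqref{eq:gconc-1} works only for $s>\frac{4-\beta}{2}$ and needs a separate hypercontractivity--Bernstein argument below that, and that you combine the two factors by H\"older instead of the paper's union bound over $\{\|u_N\|_{H^s}^2>\lambda^{\eta}\}\cup\{\|\partial_x u_N\|_{L^\infty}>\lambda^{1-\eta}\}$. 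The bookkeeping you flag as the main obstacle closes without loss: for $\beta<3$, splitting at $M\sim p^{1/(2s)}$ gives $\| \|u_N\|_{H^s}^2 \|_{L^p_{\rho_s}}\lesssim M^{2s-\beta}R^2+(M+p)M^{-\beta}\lesssim p^{1-\frac{\beta}{2s}}$ and $\| \|\partial_x u_N\|_{L^\infty} \|_{L^p_{\rho_s}}\lesssim M^{\frac{3-\beta}{2}}R+(\sqrt{p}+\sqrt{\log M})M^{\frac{3-\beta}{2}-s}\lesssim p^{\frac{3-\beta}{4s}}$ (the logarithm is additive, so it is harmless), and since $1-\frac{\beta}{2s}+\frac{3-\beta}{4s}=1-\frac{3(\beta-1)}{4s}$, H\"older gives exactly the exponent the paper obtains by optimizing in $\eta$; for $\beta>3$ the second factor is just $\lesssim R$ and the first gives $p^{1-\frac{\beta}{2s}}$.
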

\begin{proof}
By Proposition \ref{prop:smoothing} and layer cake representation we have that
\begin{align}\label{eq:proofint-lp-8}
\left\| \left.\left|\frac{d}{dt} \left\| u_N \right\|_{H^{s + \frac{\beta}{2}}}^2 \right|_{t=0} \right| \right\|_{L^p_{\rho_s}}^p &\lesssim \left\| \|u_N\|_{H^s}^2 \|\partial_x u_N\|_{L^{\infty}}  \right\|_{L^p_{\rho_s}}^p \nonumber \\
&= p \int_0^{\infty} \lambda^{p-1} \rho_s(\|u_N\|_{H^s}^2 \|\partial_x u_N\|_{L^{\infty}} > \lambda) d\lambda.
\end{align}
Let $\eta \in (0,1)$. Observe that, by subadditivity of $\rho_s$,
\begin{align}\label{eq:proofint-lp-7}
\rho_s(\|u_N\|_{H^s}^2 &\|\partial_x u_N\|_{L^{\infty}} > \lambda) \nonumber \\
&\leq \rho_s(\|u_N\|_{H^s}^2 > \lambda^{\eta}) + \rho_s(\|\partial_x u_N\|_{L^{\infty}} > \lambda^{1 - \eta}).
\end{align}
Therefore, it suffices to estimate the probability of these two tail events separately. 

Let $\beta < 3$. By the $H^{\frac{\beta}{2}}$ energy cutoff in $\rho_s$ and Bernstein inequality, for any $\sigma \geq 0$, $q \in [2,\infty]$ satisfying $\frac{1}{2} - \frac{1}{q} + \sigma - \frac{\beta}{2} > 0$ and any $N_0 \in 2^{\mathbb{N}}$, we have that
\begin{align}\label{eq:probcutoff-gral}
\|P_{\leq N_0} \langle D \rangle^{\sigma} u_N\|_{L^q_x} &\lesssim \sum_{\substack{M \in 2^{\mathbb{N}} \\ M \leq \min\{N_0,N\}} } M^{\frac{1}{2} - \frac{1}{q} + \sigma - \frac{\beta}{2}}  \|P_M u_N\|_{H^{\frac{\beta}{2}}} \nonumber \\
&\leq N_0^{\frac{1}{2} - \frac{1}{q} + \sigma - \frac{\beta}{2}} \|u_N\|_{H^{\frac{\beta}{2}}} \lesssim N_0^{\frac{1}{2} - \frac{1}{q} + \sigma - \frac{\beta}{2}} R.
\end{align}

\begin{itemize}
\item Regarding the $H^s$ norm, take $q = 2$ and $\sigma = s$ in \eqref{eq:probcutoff-gral}. Thus, consider $N_1$ the largest dyadic number such that
\begin{align}\label{eq:proofint-lp-2}
N_1 \leq \left(\frac{\lambda^{\eta/2}}{\sqrt{2} R}\right)^{\frac{2}{2s - \beta}},
\end{align}
where we set $N_1 = 1$ in the case this inequality is never satisfied. Therefore, we can discard the low frequencies from the corresponding tail event in the following sense:
\begin{align}\label{eq:proofint-lp-1}
&\rho_s(\|u_N\|_{H^s}^2 > \lambda^{\eta}) \\
&\qquad  \leq \rho_s\left(\|P_{\leq N_1}u_N\|_{H^s}^2 > \frac{\lambda^{\eta}}{2} \right) + \rho_s\left(\|P_{> N_1}u_N\|_{H^s}^2 > \frac{\lambda^{\eta}}{2} \right) \nonumber \\
&\qquad = \rho_s\left(\|P_{> N_1}u_N\|_{H^s}^2 > \frac{\lambda^{\eta}}{2} \right).
\end{align}
Let $(\sigma_M)_{M \in 2^{\mathbb{N}}}$ be the sequence defined by $\sigma_M = c_0/(\log_2(M) + 1 - \log_2(N_1))^2$ for any $M \in 2^{\mathbb{N}}$ and for $c_0 > 0$ a constant small enough so that $\sum_{M \geq N_1} \sigma_M \leq 1$. Then
\begin{align*}
\rho_s\left(\|P_{> N_1}u_N\|_{H^s}^2 > \frac{\lambda^{\eta}}{2} \right) \leq \sum_{\substack{M \in 2^{\mathbb{N}} \\ N_1 \leq M \leq N} } \rho_s\left(\|P_M u_N\|_{H^s}^2 > \frac{\lambda^{\eta}\sigma_M}{2} \right).
\end{align*}
Notice that
\begin{align}\label{eq:proofint-lp-9}
\mathbb{E}_{\rho_s} \left( \|P_M u_N\|_{H^s}^2 \right) &=  \mathbb{E}_{\rho_s}\left( \sum_{|n| \sim M} \langle n \rangle^{2s} \frac{|g_n^{\omega}|^2}{|n|^{2s + \beta}} \right) \\
&\simeq M^{- \beta + 1} \left( \sum_{|n| \sim M} \frac{1}{|n|} \right) \simeq M^{- \beta + 1} \mathrm1_{M \leq N}
\end{align}
up to some constant independent of $M$. Moreover, for every $M \in 2^{\mathbb{N}}$ such that $M > N_1$, each term at the right-hand side of \eqref{eq:proofint-lp-1} is bounded by
\begin{align}\label{eq:proofint-lp-12}
&\rho_s\Big(\|P_M u_N\|_{H^s}^2 - \mathbb{E}_{\rho_s}\left( \|P_M u_N\|_{H^s}^2 \right) \nonumber  \\
&\hspace{50mm} > \frac{\lambda^{\eta}\sigma_M}{2} - M^{- \beta + 1} \mathrm1_{M \leq N} \Big) \nonumber \\
&= \rho_s\Big( \sum_{|n| \sim M} |g_n^{\omega}|^2 - \mathbb{E}_{\omega}\left( \sum_{|n| \sim M} |g_n^{\omega}|^2 \right) \nonumber  \\
&\hspace{50mm}  > M^{\beta} \left( \frac{\lambda^{\eta}\sigma_M}{2} - M^{- \beta + 1} \mathrm1_{M \leq N} \right) \Big).
\end{align}
Notice that, given $\beta > 1$, the value
\begin{align*}
C_{s,\beta} := \sup_{M \in 2^{\mathbb{N}}} \left( \frac{2 M^{ - \beta + 1} (\log_2(M) - \log_2(N_1) + 1)^2}{c_0} \right)^{\frac{1}{\eta}},
\end{align*}
is finite and only depends on $\beta$. Therefore, we can use the concentration bound \eqref{eq:gconc-2} in \eqref{eq:proofint-lp-12} in order to obtain that for any $\lambda > C_{s,\beta}$ there exists some constant $\tilde{C}_1 > 0$ independent of $M$ such that
\begin{align*}
\rho_s\left( \|P_M u_N\|_{H^s}^2 > \frac{\lambda^{\eta}\sigma_M}{2} \right) \lesssim \exp\left( - \tilde{C}_1 \frac{\lambda^{\eta}\sigma_M M^{\beta}}{2} \right).
\end{align*}
If we apply this large deviation in \eqref{eq:proofint-lp-1} and recall the choice of $N_1$ in \eqref{eq:proofint-lp-2}, we obtain that
\begin{align}\label{eq:proofint-lp-5}
\rho_s(\|u_N\|_{H^s}^2 > &\lambda^{\eta}) \lesssim \sum_{\substack{M \in 2^{\mathbb{N}} \\ N_1 \leq M \leq N} } \exp \left( - \tilde{C}_1 \frac{\lambda^{\eta}\sigma_M M^{\beta}}{2} \right) \nonumber \\
&\lesssim \exp\left( -C_1 \lambda^{\eta\left( 1 + \frac{\beta}{2s - \beta} \right)} \right) = \exp\left( - C_1 \lambda^{\frac{2s\eta}{2s - \beta}} \right)
\end{align}
for some constant $C_1 > 0$ independent of $\lambda$.

\item Regarding the $L^{\infty}$ norm of $\partial_x u_N$, let $q = \infty$ and $\sigma = 1$ in \eqref{eq:probcutoff-gral} (recall that $\beta < 3$). Analogously, consider $N_2$ the largest dyadic number such that
\begin{align}\label{eq:proofint-lp-3}
N_2 \leq \left( \frac{\lambda^{1 - \eta}}{2R} \right)^{\frac{2}{3 - \beta}},
\end{align}
where we set $N_2 = 1$ in the case this inequality is never satisfied. Thus, we can bound the second tail probability at the right-hand side of \eqref{eq:proofint-lp-7} by
\begin{align}\label{eq:proofint-lp-4}
&\rho_s\left(\|P_{\leq N_2} \partial_x u_N\|_{L^{\infty}} > \frac{\lambda^{1-\eta}}{2} \right) + \rho_s\left(\|P_{> N_2} \partial_x u_N\|_{L^{\infty}} > \frac{\lambda^{1 - \eta}}{2} \right) \nonumber \\
&\qquad  = \rho_s\left(\|P_{> N_2} \partial_x u_N\|_{L^{\infty}} > \frac{\lambda^{1 - \eta}}{2} \right).
\end{align}
Given $(\sigma_M)_{M \in 2^{\mathbb{N}}}$ as before but changing $N_1$ by $N_2$ in its definition, we obtain that
\begin{align}\label{eq:proofint-lp-30}
&\rho_s\left(\|P_{> N_2} \partial_x u_N\|_{L^{\infty}} > \frac{\lambda^{1-\eta}}{2} \right) \nonumber \\
&\qquad \leq \sum_{\substack{M \in 2^{\mathbb{N}} \\ N_2 \leq M \leq N} } \rho_s\left(\|P_M \partial_x u_N\|_{L^{\infty}} > \frac{\lambda^{1 - \eta}\sigma_M}{2} \right).
\end{align}
A difference with the $H^s$ case is that the $L^{\infty}$ norm does not allow us to use the quadratic Gaussian concentration from \eqref{eq:gconc-2}. Instead, we estimate the large deviation of the right-hand side of
\begin{align*}
\|P_M \partial_x u_N\|_{L^{\infty}} \lesssim M \sum_{|n| \sim M} \frac{|g_n^{\omega}|}{|n|^{s + \frac{\beta}{2}}},
\end{align*}
for which
\begin{align*}
\mathbb{E}_{\rho_s} &\left( M \sum_{|n| \sim M} \frac{|g_n^{\omega}|}{|n|^{s + \frac{\beta}{2}}} \right) \\
&\simeq M^{-s - \frac{\beta - 4}{2}} \mathbb{E}_{\rho_s} \left( \sum_{|n| \sim M} \frac{|g_n^{\omega}|}{|n|} \right) \simeq M^{-s - \frac{\beta - 4}{2}} \mathrm1_{M \leq N}.
\end{align*}
Therefore, for any $M \in 2^{\mathbb{N}}$, $M > N_2$, we can bound each term of the last sum in \eqref{eq:proofint-lp-30} by
\begin{align}\label{eq:proofint-lp-13}
\rho_s &\left( \|P_M  \partial_x u_N\|_{L^{\infty}} > \frac{\lambda^{1 - \eta}\sigma_M}{2} \right) \nonumber \\
&\lesssim \rho_s\Big(M \sum_{|n| \sim M} \frac{|g_n^{\omega}|}{|n|^{s + \frac{\beta}{2}}} - \mathbb{E}_{\rho_s} \left( M \sum_{|n| \sim M} \frac{|g_n^{\omega}|}{|n|^{s + \frac{\beta}{2}}} \right) \nonumber \\
& \hspace{30mm} > \frac{\lambda^{1 - \eta}\sigma_M}{2} - M^{-s - \frac{\beta - 4}{2}} \mathrm1_{M \leq N} \Big) \nonumber \\
&\lesssim \rho_s\Big( \sum_{|n| \sim M} |g_n^{\omega}| - \mathbb{E}_{\omega} \left(  \sum_{|n| \sim M} |g_n^{\omega}| \right) \nonumber \\
& \hspace{30mm} > M^{s + \frac{\beta - 2}{2}} \left( \frac{\lambda^{1 - \eta}\sigma_M}{2} - M^{-s - \frac{\beta - 4}{2}} \mathrm1_{M \leq N} \right) \Big).
\end{align}
If we assume $s > \frac{4 - \beta}{2}$, then the value
\begin{align}\label{eq:proofint-det-13}
\tilde{C}_{s,\beta} := \sup_{M \in 2^{\mathbb{N}}} \left( \frac{2 M^{\frac{4 - \beta}{2} - s} (\log_2(M) - \log_2(N_2) + 1)^2}{c_0} \right)^{\frac{1}{1-\eta}}
\end{align}
is finite and only depends on $s$ and $\beta$. Therefore, proceeding as before, for any $\lambda > \tilde{C}_{s,\beta}$ there exists some constant $\tilde{C}_2 > 0$ independent of $M$ such that, from \eqref{eq:proofint-lp-13} and the tail bound \eqref{eq:gconc-1}, we obtain
\begin{align*}
&\rho_s\left(\|P_M  \partial_x u_N\|_{L^{\infty}} > \frac{\lambda^{1 - \eta}\sigma_M}{2} \right) \nonumber \\
&\qquad \lesssim \exp\left(-\tilde{C}_2 \frac{1}{M} \left( M^{s + \frac{\beta - 2}{2}}  \frac{\lambda^{1 - \eta}\sigma_M}{2} \right)^2 \right) \nonumber \\
&\qquad = \exp\left( -\frac{\tilde{C}_2 \lambda^{2(1 - \eta)} \sigma_M^2}{4} M^{2s + \beta - 3} \right).
\end{align*}
Applying this large deviation in \eqref{eq:proofint-lp-4} and using the choice of $N_2$ in \eqref{eq:proofint-lp-3}, we have that
\begin{align}\label{eq:proofint-lp-6}
&\rho_s(\|\partial_x u_N\|_{L^{\infty}} > \lambda^{1 - \eta}) \nonumber \\
&\qquad \lesssim \sum_{\substack{M \in 2^{\mathbb{N}} \\ N_2 \leq M \leq N} }\exp\left( -\frac{\tilde{C}_2 \lambda^{2(1 - \eta)} \sigma_M^2}{4} M^{2s + \beta - 3} \right) \nonumber \\
&\qquad \lesssim \exp\left(-C_2 \lambda^{2(1 - \eta)\left( \frac{2s}{3 - \beta} \right)} \right),
\end{align}
for some constant $C_2 > 0$ independent of $\lambda$.

We consider now $s \in \left. \left( \frac{3 - \beta}{2} , \frac{4 - \beta}{2} \right] \right.$. Recall that the value $\tilde{C}_{s,\beta}$ given in \eqref{eq:proofint-det-13} is finite only when $s > \frac{4 - \beta} {2}$. Therefore, we require a new scheme. Consider without loss of generality that $\sigma_M > M^{-\varepsilon}$ for some $\varepsilon > 0$ sufficiently small satisfying $\varepsilon < s - \frac{1}{2}$, and define $N_2^*$ to be the largest $M \in 2^{\mathbb{N}}$ such that
\begin{align}\label{eq:proofint-lp-14}
M^{-s + \frac{4-\beta}{2}} \leq \frac{\sigma_M \lambda^{1 - \eta}}{2}, \quad \text{i.e.} \quad   N_2^* \simeq \lambda^{\frac{1 - \eta}{\frac{4 - \beta}{2} - s + \varepsilon}},
\end{align}
where $N_2^* = 1$ in the case this inequality is never satisfied. Notice that, from \eqref{eq:proofint-lp-3} and \eqref{eq:proofint-lp-14},
\begin{align*}
N_2 \leq N_2^* \Longleftrightarrow s > \frac{1}{2},
\end{align*}
where $\varepsilon \in (0 , s - \frac{1}{2})$ is taken as small as we need. In this regard, for $N_2 \leq M \leq N_2^*$ we can apply the procedure that worked for $s > \frac{4 - \beta}{2}$, given that for those $M$ we had $\tilde{C}_{s,\beta} < \infty$. For this reason, it is convenient to bound as
\begin{align}\label{eq:proofint-lp-15}
\rho_s &(\|\partial_x u_N\|_{L^{\infty}} > \lambda^{1 - \eta} ) \nonumber \\
&\leq \rho_s \left( \|P_{\leq N_2^*} P_{> N_2} \partial_x u_N\|_{L^{\infty}} > \frac{\lambda^{1 - \eta}}{4} \right) \nonumber \\
&\quad + \rho_s \left( \|P_{> N_2^*} \partial_x u_N\|_{L^{\infty}} > \frac{\lambda^{1 - \eta}}{4} \right),
\end{align}
so that, at the right-hand side, everything we need to control is the second term. For any $p \geq 1$ and $M \in 2^{\mathbb{N}}$ we have that
\begin{align*}
\|P_M\partial_xu\|_{L^p_{\gamma_s}} \simeq \sqrt{p} M^{\frac{3 - \beta}{2} - s},
\end{align*}
so for any $p,q \in [1,\infty)$, $p \geq q$,
\begin{align*}
\left\| \|P_M\partial_xu \right\|_{L^q_x} \|_{L^p_{\gamma_s}} \lesssim \sqrt{p} M^{\frac{3 - \beta}{2} - s}.
\end{align*}
Then, by Bernstein inequality we have that for any $\varepsilon^* > 0$
\begin{align*}
\left\| \|P_M\partial_xu \right\|_{L^\infty_x} \|_{L^p_{\gamma_s}} \lesssim \sqrt{p} M^{\frac{3 - \beta}{2} - s + \varepsilon^*}.
\end{align*}
By Markov inequality,
\begin{align*}
\gamma_s\left( \|P_M\partial_x u\|_{L^{\infty}_x} > \frac{\lambda^{1-\eta} \sigma_M}{4} \right) \leq \left( \frac{4 \sqrt{p} M^{ \frac{3 - \beta}{2} - s + \varepsilon^*}}{\lambda^{1-\eta} \sigma_M} \right)^p.
\end{align*}
Given $\lambda \geq (4e)^{\frac{1}{1 - \eta}}$, take $p$ such that
\begin{align*}
\frac{4 \sqrt{p} M^{\frac{3 - \beta}{2} - s + \varepsilon^*}}{\lambda^{1-\eta} \sigma_M} = \frac{1}{e} \Longleftrightarrow p = \frac{\sigma_M^2 \lambda^{2(1 - \eta)}}{16 e^2 M^{2\left( \frac{3 - \beta}{2} - s + \varepsilon^* \right)}},
\end{align*}
which is an available choice for $p$ because, without loss of generality, we can assume $\sigma_M \geq M^{\frac{3 - \beta}{2} + \varepsilon^* - s}$ for any $M \in 2^{\mathbb{N}}$, $M > N_2^*$ (recall that $s > \frac{3 - \beta}{2}$ and $\varepsilon^*$ can be chosen sufficiently small). Moreover, we can extend to any $\lambda > 0$ by multiplying by some constant independent of $\lambda$ and $M$ the resulting bound. Namely, 
\begin{align}\label{eq:proofint-lp-17}
&\gamma_s\left( \|P_M\partial_xu \|_{L^\infty} > \frac{\lambda^{1-\eta} \sigma_M}{4} \right) \nonumber \\
&\qquad \lesssim \exp\left(- \frac{\sigma_M^2 \lambda^{2(1 - \eta)}}{16e^2} M^{-3 - 2\varepsilon^* + \beta + 2s}\right).
\end{align}
If we plug \eqref{eq:proofint-lp-17} into the second term of the right-hand side of \eqref{eq:proofint-lp-15} and use the definition of $N_2^*$ given in \eqref{eq:proofint-lp-14}, for any $s \in \left. \left( \frac{3 - \beta}{2} , \frac{4 - \beta}{2} \right] \right.$ we get that
\begin{align*}
&\rho_s\left( \|P_{> N_2^*} \partial_x u_N\|_{L^{\infty}} > \frac{\lambda^{1 - \eta}}{4} \right) \\
&\qquad \lesssim \exp\left( - C_3 \lambda^{2(1 - \eta) \left( 1 + \frac{2s + \beta -3 - 2\varepsilon^*}{4 - \beta - 2s + 2\varepsilon} \right)} \right),
\end{align*}
for $C_3 > 0$ independent of $\lambda$. Thus, in this range of $s$ and considering also the tail bound \eqref{eq:proofint-lp-6},
\begin{align}\label{eq:proofint-lp-16}
&\rho_s(\|\partial_x u_N\|_{L^{\infty}} > \lambda^{1 - \eta})\nonumber \\
&\quad \leq \exp\left(-C_2 \lambda^{2(1 - \eta)\left( \frac{2s}{3 - \beta} \right)} \right) + \exp\left( - C_3 \lambda^{2(1 - \eta) \left( \frac{1 + 2(\varepsilon - \varepsilon^*)}{4 - \beta - 2s + 2\varepsilon} \right)} \right)\nonumber \\
&\quad \lesssim \exp\left(-C_2 \lambda^{2(1 - \eta)\left( \frac{2s}{3 - \beta} \right)} \right),
\end{align}
where we used that, given $\varepsilon^*$ and $\varepsilon$ sufficiently small, and $s > \frac{3 - \beta}{2}$, 
\begin{align*}
\frac{1}{4 - \beta - 2s + 2\varepsilon^*} \geq \frac{2s}{3 - \beta}.
\end{align*}
\end{itemize}

Thus, from \eqref{eq:proofint-lp-7}, \eqref{eq:proofint-lp-5}, \eqref{eq:proofint-lp-6} and \eqref{eq:proofint-lp-16} we have that
\begin{align*}
\rho_s(\|u_N\|_{H^s}^2 &\|\partial_x u_N\|_{L^{\infty}} > \lambda) \nonumber \\
&\lesssim  \exp\left( - C_1 \lambda^{\frac{2s\eta}{2s - \beta}} \right) +  \exp\left(-C_2 \lambda^{2(1 - \eta)\left( \frac{2s}{3 - \beta} \right)} \right).
\end{align*}
The minimum with respect to $\eta$ at the right-hand side is attained where both exponents of $\lambda$ are equal, that is to say
\begin{align*}
\frac{2s\eta}{2s - \beta} = 2(1 - \eta)\left( \frac{2s}{3 - \beta} \right) \Longleftrightarrow \eta = \frac{2(2s - \beta)}{4s - 3(\beta - 1)}.
\end{align*}
We notice that $\eta \in (0,1)$ due to $\beta < 3$ and $s > \frac{\beta}{2}$. Therefore
\begin{align*}
\rho_s(\|u_N\|_{H^s}^2 &\|\partial_x u_N\|_{L^{\infty}} > \lambda) \lesssim \exp\left( - c \lambda^{1/\alpha}  \right) \text{ where } \alpha = 1 - \frac{3(\beta - 1)}{4s},
\end{align*}
where $c > 0$ and the implicit constant are independent of $\lambda$. If we use this bound in the integral representation \eqref{eq:proofint-lp-8}, we obtain that
\begin{align}\label{eq:proofint-lp-11}
\left\| \left.\left|\frac{d}{dt} \left\| u_N \right\|_{H^{s + \frac{\beta}{2}}}^2 \right|_{t=0} \right| \right\|_{L^p_{\rho_s}}^p &\lesssim p \int_0^{\infty} \lambda^{p-1} e^{-c\lambda^{1/\alpha}} d\lambda.
\end{align}
An elementary computation shows that (recall that $\alpha p \notin \mathbb{Z}^- \cup \{0\}$)
\begin{align*}
\int_0^{\infty} \lambda^{p-1} e^{-c\lambda^{1/\alpha}} d\lambda = c^{-\alpha p} \alpha \Gamma(\alpha p),
\end{align*}
for $\Gamma(\cdot)$ the Gamma function. Moreover, by Stirling formula we have the following equivalence relation for sequences under $n \rightarrow \infty$:
\begin{align*}
\Gamma(n) \sim \sqrt{2\pi n} \left( \frac{n}{e}\right)^n.
\end{align*}
Therefore, for $p$ large enough,
\begin{align*}
\left( p \int_0^{\infty} \lambda^{p-1} e^{-c\lambda^{1/\alpha}} d\lambda \right)^{1/p} \lesssim p^{1/p} c^{-\alpha} \alpha^{1/p} \left( 2\pi p\alpha \right)^{1/2p} \left( \frac{p \alpha}{e} \right)^{\alpha} \lesssim_{\alpha} p^{\alpha}.
\end{align*}
Thus, there exists a constant $C>0$ independent of $p$ such that
\begin{align*}
\left\| \left.\left|\frac{d}{dt} \left\| u_N \right\|_{H^{s + \frac{\beta}{2}}}^2 \right|_{t=0} \right| \right\|_{L^p_{\rho_s}} \leq C p^{\alpha}
\end{align*}
This concludes the case $\beta < 3$. For $\beta > 3$, recall from \eqref{eq:proofint-det-6} that
\begin{align*}
&\|\partial_x u_N\|_{L^{\infty}} \lesssim \|u_N\|_{H^{\frac{\beta}{2}}}.
\end{align*}
By this inequality and Proposition \ref{prop:smoothing},
\begin{align*}
\frac{d}{dt} \left\| u_N \right\|_{H^{s + \frac{\beta}{2}}}^2 \lesssim R\|u_N\|_{H^s}^2.
\end{align*}
Given $\lambda > 0$, define $N_3$ as the largest dyadic number such that
\begin{align}\label{eq:proofint-lp-10}
N_3 \lesssim \left( \frac{\lambda^{1/2}}{\sqrt{2} R^\frac{3}{2}} \right)^{\frac{2}{2s - \beta}}.
\end{align}
Therefore, discarding the small frequency terms,
\begin{align}\label{eq:proofint-lp-31}
\rho_s( \|u_N\|_{H^s}^2 &\|\partial_x u_N\|_{L^{\infty}} > \lambda) \leq \sum_{\substack{ M \in 2^{\mathbb{N}} \\ N_3 < M \leq N }} \rho_s \left( \|P_M u_N\|_{H^s}^2 > \frac{\lambda}{2R}\right).
\end{align}
From \eqref{eq:proofint-lp-9} and proceeding as in the previous case (replacing $N_1$ by $N_3$ in the definition of $\sigma_M$) we bound each term of the last sum in \eqref{eq:proofint-lp-31} by
\begin{align*}
&\lesssim \rho_s\left(\|P_M u_N\|_{H^s}^2 - \mathbb{E}_{\rho_s}\left( \|P_M u_N\|_{H^s}^2 \right)  > \frac{\lambda\sigma_M}{2 R} - M^{-2s - \beta + 1} \mathrm1_{M \leq N} \right) \\
&\quad = \rho_s\left( \sum_{|n| \sim M} |g_n^{\omega}|^2 - \mathbb{E}_{\omega}\left( \sum_{|n| \sim M} |g_n^{\omega}|^2 \right)  > M^{\beta} \left( \frac{\lambda\sigma_M}{2 R} - M^{-2s - \beta + 1} \mathrm1_{M \leq N} \right) \right)\\
&\quad \lesssim \exp \left( - \tilde{C}_3 \frac{\lambda \sigma_M M^{\beta}}{2 R} \right)
\end{align*}
for some constant $\tilde{C}_3 > 0$ independent of $M$. By the choice of $N_3$ in \eqref{eq:proofint-lp-10}, we obtain that
\begin{align*}
\rho_s( \|u_N\|_{H^s}^2 &\|\partial_x u_N\|_{L^{\infty}} > \lambda) \lesssim \exp\left( -C_4 \lambda^{\frac{2s}{2s - \beta}} \right)
\end{align*}
for some constant $C_4 > 0$ independent of $\lambda$. In this way, using again \eqref{eq:proofint-lp-11} and Stirling formula, there exists some constant $C > 0$ independent of $p$ such that
\begin{align*}
\left\| \left.\left|\frac{d}{dt} \left\| u_N \right\|_{H^{s + \frac{\beta}{2}}}^2 \right|_{t=0} \right| \right\|_{L^p_{\rho_s}} \leq C p^{\alpha}
\end{align*}
where $\alpha = 1 - \frac{\beta}{2s}$. This concludes the proof.

\end{proof}

\section{Proof of Theorem \ref{thm:qinv}}
\label{sec:qinv}
In this section, we use the framework given in \cite{Tzvetkov2015} together with the $L^p_{\rho_s}$ momentum estimate from Proposition \ref{prop:lpcontrol} in order to prove Theorem \ref{thm:qinv}.

\begin{proof}[Proof of Theorem \ref{thm:qinv}]
Let $G$ be a Borel set in $H^s$. Using the aforementioned `change of variables formula' from \cite{Tzvetkov2015} and the factorisation \eqref{eq:factorization-gammas}, we have that
\begin{align}\label{eq:int-qinv-1}
\rho_s &(\Phi_N(t) (G)) = \int_{\Phi_N(t)(G)} \rho_s(du) \nonumber \\
&= \int_G \mathrm1_{B_{\beta/2}(R)}(u) \exp\left(-\frac{1}{2} \|P_{\leq N}\Phi_N(t)u\|^2_{H^{s + \frac{\beta}{2}}} \right) L_N(d P_{\leq N} u) \gamma_{s,N}^{\perp}(dP_{>N}u) \nonumber\\
&= \int_G \exp\left(\frac{1}{2} \|P_{\leq N} u\|^2_{H^{s + \frac{\beta}{2}}}-\frac{1}{2} \|P_{\leq N}\Phi_N(t)u\|^2_{H^{s + \frac{\beta}{2}}} \right) \rho_s(du).
\end{align}
Given that
\begin{align*}
t \in [0,\infty) \rightarrow \Phi_N(t)
\end{align*}
is a one parameter group of transformations, we have that
\begin{align*}
\frac{d}{dt} \left( \rho_s \circ \Phi_N(t)(A)\right)\left. \right|_{t = \overline{t}} = \frac{d}{dt} \left( \rho_s \circ \Phi_N(t)(\Phi_N(\overline{t})(A)) \right)\left. \right|_{t = 0}.
\end{align*}
Thus, using \eqref{eq:int-qinv-1} with $G = \Phi_N(\overline{t})(A)$, we obtain 
\begin{align*}
\frac{d}{dt} &\left( \rho_s (\Phi_N(t)(A))\right)\left. \right|_{t = \overline{t}} \nonumber \\
&= \left. \frac{d}{dt} \int_{\Phi_N(\overline{t})(A)} \exp\left( \frac{1}{2}\|P_{\leq N} u\|^2_{H^{s + \frac{\beta}{2}}} - \frac{1}{2}\|P_{\leq N} \Phi_N(t) u\|^2_{H^{s + \frac{\beta}{2}}} \right|_{t=0} \right) \rho_s(du) \nonumber \\
&=  \int_{\Phi_N(\overline{t})(A)}  \left( - \frac{1}{2} \left. \frac{d}{dt} \| P_{\leq N} \Phi_N(t) u\|^2_{H^{s + \frac{\beta}{2}}} \right|_{t=0} \right)\rho_s(du).
\end{align*}
Let $p \geq 1$. By Hölder's inequality, this time derivative can be bounded by
\begin{align*}
\rho_s(\Phi_N(\overline{t})(A))^{1 - \frac{1}{p}}  \left\| \left. \frac{d}{dt} \| P_{\leq N} \Phi_N(t) u\|^2_{H^{s + \frac{\beta}{2}}} \right|_{t=0} \right\|_{L^p_{\rho_s}}.
\end{align*}
Therefore, the estimate from Proposition \ref{prop:lpcontrol} implies that
\begin{align*}
\frac{d}{dt} \left( \rho_s (\Phi_N(t)(A))\right)\left. \right|_{t = \overline{t}} \leq C p^{\alpha} (\rho_s(\Phi_N(\overline{t})(A)))^{1 - \frac{1}{p}}.
\end{align*}
for $\alpha$ given in \eqref{eq:def-alpha} (recall that $\alpha \in (0,1)$ under all the considered circumstances). Thus, for any $t \in \mathbb{R}$,
\begin{align*}
\frac{d}{dt} (\rho_s(\Phi_N(t)(A))^{\frac{1}{p}}) \leq C p^{\alpha-1}.
\end{align*}
First assume that $\rho_s(A) > 0$. If we integrate in $t$
\begin{align}\label{eq:int-qinv-2}
\rho_s(\Phi_N(t)(A)) &\lesssim \left( \rho_s(A)^{\frac{1}{p}} + C p^{\alpha-1} |t|  \right)^p \nonumber \\
&=  \rho_s(A)\left( 1 + C p^{\alpha-1} |t|  \rho_s(A)^{-\frac{1}{p}}  \right)^p  \nonumber \\
&=  \rho_s(A) \exp\left(p \log ( 1 + C p^{\alpha-1} |t| \rho_s(A)^{-\frac{1}{p}} ) \right).
\end{align}
Recall that $\log(1+x) \leq x$ for any $x \geq 0$, and consider, without loss of generality, $p := p(A)$ such that
\begin{align*}
e = (2\rho_s(A))^{-\frac{1}{p}} \Longleftrightarrow p = \log\left( \frac{1}{2\rho_s(A)} \right).
\end{align*}
Then there exists some constant $C_1 > 0$ independent of $t$ and $\rho_s(A)$ for which
\begin{align*}
&\rho_s(\Phi_N(t)(A)) \lesssim  \rho_s(A) \exp\left(C_1 \left( \log\left( \frac{1}{2\rho_s(A)} \right) \right)^{\alpha} |t|  \right).
\end{align*}
Following Proposition 6.3 in \cite{GLT2023}, we use a Young inequality  in order to give a more precise evolution of $\rho_s$ under the flow $\Phi_N(t)$. Using their strategy, everything we need is that the exponent of $p$ at the right hand side of the estimate from Proposition \ref{prop:lpcontrol} is in $(0,1)$, which is our case. Namely, given $\mu > 0$ and $\tilde{\sigma} \in (0,1)$, recall the Young inequality
\begin{align*}
|a| |b| \leq \tilde{\sigma} (\mu |a|)^{\frac{1}{\sigma}} + (1 - \tilde{\sigma})\left(  \frac{|b|}{\mu} \right)^{\frac{1}{1 - \sigma}}, \quad a,b \in \mathbb{R}.
\end{align*}
Let us take
\begin{align*}
a = \left( \log\left( \frac{1}{2\rho_s(A)} \right) \right)^{\alpha}, \quad b = C_1|t|, \quad \tilde{\sigma} = \alpha, \quad 0 < \mu \ll 1.
\end{align*}
Therefore, there exists some constant $C_2 > 0$ independent of $t$ and $\rho_s(A)$ such that
\begin{align}\label{eq:int-qinv-4}
\rho_s(\Phi_N(t)(A)) &\lesssim \rho_s(A) \exp\left( \alpha \mu^{\frac{1}{\alpha}} \log \left( \frac{1}{2\rho_s(A)} \right) \right) \exp \left( \left( \frac{1 - \alpha}{\mu^{\frac{1}{1-\alpha}}}\right) (C_1 |t|)^{\frac{1}{1 - \alpha}} \right) \nonumber \\
&\leq \rho_s(A)^{1 - \alpha \mu^{\frac{1}{\alpha}}}  \exp\left( C_2 (1 +|t|)^{\frac{1}{1 - \alpha}} \right).
\end{align}
In order to extend to $N \rightarrow \infty$ at the left-hand side of \eqref{eq:int-qinv-4}, by the stability properties of the flow of \eqref{eq:cauchy-bbm} with respect to the truncated Cauchy system \eqref{eq:cauchy-bbm-t} for data in $H^{s + \frac{\beta - 1}{2}-\varepsilon} \cap B_{\beta/2}(R)$ with $\varepsilon > 0$ (Proposition 2.10 in \cite{Tzvetkov2015}), inequality \eqref{eq:int-qinv-4} and the inner regularity of the measure $\rho_s$ (for details, see Lemma 8.1 in \cite{Tzvetkov2015}), we obtain that
\begin{align*}
\rho_s(\Phi(t)(A)) \lesssim \rho_s(A)^{1 - \alpha \mu^{\frac{1}{\alpha}}}  \exp\left( C_2 (1+|t|)^{\frac{1}{1 - \alpha}} \right).
\end{align*}

On the other hand, assume $\rho_s(A) = 0$. Without loss of generality take any $\varepsilon \in (0,\frac{1}{2})$. We first prove that
\begin{align}\label{eq:int-qinv-3}
\rho_s(\Phi_N(t)(A)) < \varepsilon.
\end{align}
Due to the first inequality in \eqref{eq:int-qinv-2} and convexity (notice that $p \geq 1$), we have that
\begin{align*}
\rho_s(\Phi_N(t)(A)) \leq 2^{p-1}(\rho_s(A) + (Cp^{\alpha - 1}|t|)^p ) \leq  2^{p-1} C^p |t|^p,
\end{align*}
where we used $p^{\alpha - 1} \leq 1$. Let $p = -\log_2(\varepsilon)$ and $t_R$ such that $t_R \leq \frac{1}{4C}$. Then, for any $|t| \leq t_R$ we get \eqref{eq:int-qinv-3}. We should notice that $t_R$ only depends on $R$, $\mu$, $s$ and $\beta$, so we can extend to any $t \in \mathbb{R}$ by iterating this procedure infinitely in time and by using the fact that
\begin{align*}
t \in (\mathbb{R},+) \mapsto \Phi_N(t)
\end{align*}
is a one parameter group of transformations. To upgrade to $\Phi(t)$ from $\Phi_N(t)$ with $N < \infty$, we refer again to Lemma 8.1 in \cite{Tzvetkov2015}. 

Finally, regarding the quasi-invariance of $\gamma_s$ under $\Phi(t)$,
following \cite{Tzvetkov2015} notice that
\begin{align*}
\Phi(t)(A) = \bigcup_{R=1}^{\infty} \Phi(t)(A \cap B_{\beta/2}(R))
\end{align*}
for any Borel set $A$ of $H^s$. Assume $\gamma_s(A) = 0$, so in particular $\rho_s(A) = 0$. Thus, since 
\begin{align*}
\gamma_s(\Phi(t)(A \cap B_{\beta/2}(R))) = \rho_s(\Phi(t)(A)) = 0
\end{align*}
for any $R > 0$ and $t \in \mathbb{R}$, then by dominated convergence theorem we get the quasi-invariance of $\gamma_s$. 
\end{proof}

\begin{remark}
Let $t \in \mathbb{R}$. From Theorem \ref{thm:qinv}, we have that the measure $\rho_s \circ \Phi(t)$ is absolutely continuous w.r.t. $\rho_s$, with a density $f_s(t,u) \in L^1_{\rho_s}$. In fact, proceeding as in Proposition 6.4 from \cite{GLT2023}, one can prove that $f_s(t,u) \in L^p_{\rho_s}$ for any $p \geq 1$ and $t \in \mathbb{R}$. 
\end{remark}

\section{Proof of Theorem \ref{thm:holderbound}}
\label{eq:globalization}

The proof is an application of the quantitative quasi-invariance from Theorem \ref{thm:qinv}, large deviation estimates for $C^{s + \frac{\beta - 1}{2} - \varepsilon}$-norms ($\varepsilon > 0$), and Bourgain's globalization argument from \cite{Bourgain1994}.

\begin{lemma}\label{lemma:largedev-alpha}
Let $\beta > 1$, $s > \frac{\beta}{2}$, $\sigma \in \left(0,s + \frac{\beta-1}{2}\right)$ such that $\sigma \notin \mathbb{N}$ and $\lambda > 0$. Then there exist $C,c>0$ independent of $\lambda$ such that
\begin{align*}
\gamma_s(\|u\|_{C^{\sigma}} > \lambda) \leq C e^{-c \lambda^{2}}.
\end{align*}
\end{lemma}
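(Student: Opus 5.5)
We work with the Besov characterization \eqref{eq:equiv-norms}, so that $\|u\|_{C^{\sigma}} \simeq \sup_{M \in 2^{\mathbb{N}}} M^{\sigma}\|\Delta_M u\|_{L^\infty}$ for $\sigma \notin \mathbb{N}$. Under $\gamma_s$ each $\Delta_M u$ is a centered Gaussian trigonometric polynomial with frequencies $|n| \sim M$:
\begin{align*}
\Delta_M u(x) = \sum_{|n|\sim M} \varphi(n/M)\frac{g_n^{\omega}}{|n|^{s+\frac{\beta}{2}}} e^{inx}.
\end{align*}
Fix $\kappa := \frac{1}{2}\left(s + \frac{\beta-1}{2} - \sigma\right) > 0$. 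The plan is to prove a Gaussian tail for each dyadic block, with a gain $M^{-\kappa}$, and then take a union bound over $M$ using a summable weight sequence, in the same way as the $\sigma_M$ were used in Proposition \ref{prop:lpcontrol}.

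\textbf{Step 1: $L^p_\omega$ moments of a single block.} For fixed $x$ and $q<\infty$, $\Delta_M u(x)$ is Gaussian with variance $\simeq \sum_{|n|\sim M}|n|^{-2s-\beta} \simeq M^{1-2s-\beta}$. Hence
\begin{align*}
\|\Delta_M u(x)\|_{L^p_\omega} \lesssim \sqrt{p}\, M^{\frac{1-\beta}{2}-s}.
\end{align*}
For $p\ge q$, Minkowski's inequality gives $\|\|\Delta_M u\|_{L^q_x}\|_{L^p_\omega} \lesssim \sqrt{p}\,M^{\frac{1-\beta}{2}-s}$. Bernstein's inequality then gives $\|\Delta_M u\|_{L^\infty_x}\lesssim M^{1/q}\|\Delta_M u\|_{L^q_x}$. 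Choose $q$ large enough that $1/q<\kappa$. This yields
\begin{align*}
\big\| M^{\sigma}\|\Delta_M u\|_{L^\infty}\big\|_{L^p_\omega} \lesssim \sqrt{p}\, M^{-\kappa} \qquad \text{for all } p\ge q.
\end{align*}

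\textbf{Step 2: single-block tail.} By Markov's inequality, optimized over $p$ exactly as in the derivation of \eqref{eq:proofint-lp-17}, we get
\begin{align*}
\gamma_s\big(M^{\sigma}\|\Delta_M u\|_{L^\infty} > \mu\big) \lesssim \exp\big(-c\,\mu^2 M^{2\kappa}\big).
\end{align*}
The choice $p = c\mu^2M^{2\kappa}$ is admissible once this exceeds $q$. Small $\mu$ are absorbed into the constant, since probabilities are at most $1$.

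\textbf{Step 3: union bound.} Take $\mu = \lambda\, c_0 M^{-\kappa/2}$, which is still summable-friendly. Then
\begin{align*}
\gamma_s(\|u\|_{C^\sigma}>\lambda) \le \sum_{M} \gamma_s\big(M^\sigma\|\Delta_M u\|_{L^\infty} > c_0\lambda M^{-\kappa/2}\big) \lesssim \sum_M e^{-c\lambda^2 M^{\kappa}} \lesssim e^{-c\lambda^2}
\end{align*}
for $\lambda \gtrsim 1$. The sum is dominated by its first term because $M^\kappa$ grows geometrically along dyadic $M$. For $\lambda \lesssim 1$ the bound is trivial after enlarging $C$.

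\textbf{Expected obstacle.} The only delicate point is the loss from passing $L^q_x \to L^\infty_x$ via Bernstein. This is exactly why $\sigma$ must be strictly below $s+\frac{\beta-1}{2}$, and why $q$ has to be chosen depending on the gap $\kappa$. We must also make sure the Markov optimization is carried out uniformly in $M$, so that the constants $C,c$ do not depend on $\lambda$. The condition $\sigma\notin\mathbb{N}$ is used only to invoke the norm equivalence \eqref{eq:equiv-norms}.
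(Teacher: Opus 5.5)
Your proposal is correct and follows essentially the same route as the paper. Both arguments use Gaussian moment bounds of size $\sqrt{p}\,M^{\sigma-s-\frac{\beta-1}{2}}$ per dyadic block via Minkowski, a Bernstein step $L^q_x\to L^\infty_x$ whose $M^{1/q}$ loss is absorbed by the gap $s+\frac{\beta-1}{2}-\sigma>0$, Markov with the moment chosen proportional to $\mu^2M^{2\kappa}$, and a union bound over dyadic blocks. The differences are cosmetic: you use the smooth blocks $\Delta_M$ and the supremum form \eqref{eq:equiv-norms} of the norm, so your weights $M^{-\kappa/2}$ are in fact unnecessary. The paper instead uses the sharp projections $P_N$, the triangle inequality and an $\ell^1$-normalized weight sequence $\tilde{\sigma}_N$.
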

\begin{proof}
Let $q \geq 1$ finite and take $Q \geq q$. Using Minkowski inequality, hypercontractivity and \eqref{eq:inducing-map}, we have that
\begin{align*}
\left\| \| u \|_{W^{\sigma,q}} \right\|_{L^Q_{\gamma_s}} \leq \left\| \| \langle D \rangle^{\sigma} u \|_{L^Q_{\gamma_s}} \right\|_{L^q_x} \lesssim Q^{\frac{1}{2}} \left( \sum_{n \in \mathbb{Z}^*} \frac{1}{\langle n \rangle^{2s + \beta - 2\sigma}} \right)^{1/2},
\end{align*}
where the last sum is finite if and only if $\sigma < s + \frac{\beta-1}{2}$. If $Q < q$, by Hölder's inequality and the same computation we obtain
\begin{align*}
\left\| \| u \|_{W^{\sigma,q}} \right\|_{L^Q_{\gamma_s}} \lesssim q^{\frac{1}{2}}.
\end{align*}
Thus, for any $Q \geq 1$,
\begin{align}\label{eq:proof-ldev1}
\left\| \| u \|_{W^{\sigma,q}} \right\|_{L^Q_{\gamma_s}} \lesssim (Qq)^{\frac{1}{2}}.
\end{align}
Therefore, by Markov inequality,
\begin{align*}
\gamma_s(\| u \|_{W^{\sigma,q}} > \lambda) \leq \left( \frac{\left\| \| u \|_{W^{\sigma,q}} \right\|_{L^Q_{\gamma_s}}}{\lambda} \right)^Q \lesssim \exp \left( Q \log\left( \frac{(Qq)^{\frac{1}{2}}}{\lambda} \right)\right).
\end{align*}
Assuming $\lambda \geq e \sqrt{q}$ take $Q$ such that $Q = \lambda^2 / (e^2 q)$, so that
\begin{align}\label{eq:proof-ldev2}
\gamma_s(\| u \|_{W^{\sigma,q}} > \lambda) \lesssim e^{-\frac{\lambda^2}{e^2q}},
\end{align}
which can be extended to $\lambda < e \sqrt{q}$ if we multiply by a suitable constant $C>0$ (independent of $\lambda$). Namely
\begin{align*}
\gamma_s(\| u \|_{W^{\sigma,q}} > \lambda)  \leq C e^{-c \lambda^2} \text{ for any } \lambda > 0.
\end{align*}

Now we consider the $C^{\sigma}$-norm. Recall that $\sigma \in \left( 0 , s + \frac{\beta-1}{2} \right)$ and $\sigma \notin \mathbb{N}$. Let $p \gg 1$ such that $\sigma + \frac{1}{p} < s + \frac{\beta - 1}{2}$, and $(\tilde{\sigma}_N)_{N \in 2^{\mathbb{N}}}$ be an element in the unit ball of $\l^1_N(2^{\mathbb{N}})$ satisfying \footnote{It would suffice to take, for each $N \in 2^{\mathbb{N}}$,
\begin{align*}
\tilde{\sigma}_N := c_{\sigma}^{-1} N^{\frac{1}{2}(\sigma + \frac{1}{p} - s - \frac{\beta - 1}{2})}, \quad c_{\sigma} = \sum_{M \in 2^{\mathbb{N}}} M^{\frac{1}{2}(\sigma + \frac{1}{p} - s - \frac{\beta - 1}{2})}.
\end{align*}}
\begin{align}\label{eq:sigmaN}
\tilde{\sigma}_N \geq N^{\sigma + \frac{1}{p} - s - \frac{\beta - 1}{2}} \text{ for any } N \geq M_0,
\end{align}
for some fixed $M_0 \in 2^{\mathbb{N}}$. By subadditivity of $\gamma_s$ we have that
\begin{align*}
\gamma_s(\| u \|_{C^{\sigma}} > \lambda) \leq \sum_{N \in 2^{\mathbb{N}}} \gamma_s(\|P_N u\|_{C^{\sigma}} > \tilde{\sigma}_N \lambda).
\end{align*}
Furthermore, by Bernstein inequality
\begin{align*}
\|P_N u\|_{C^{\sigma}} \lesssim N^{\frac{1}{p}} \|P_Nu\|_{W^{\sigma,p}}.
\end{align*}
Therefore, given $Q \geq 1$, by Markov inequality and \eqref{eq:proof-ldev1}
\begin{align*}
\gamma_s(\| u \|_{C^{\sigma}} > \lambda) &\leq \sum_{N \in 2^{\mathbb{N}}} \left( \frac{ N^{\frac{1}{p}} \left \| \| P_N u \|_{W^{\sigma,p}} \right\|_{L^Q_{\gamma_s}}}{\lambda \tilde{\sigma}_N} \right)^Q \\
&\lesssim \sum_{N \in 2^{\mathbb{N}}} \left( \frac{(Qp)^{\frac{1}{2}} N^{\sigma + \frac{1}{p} - s - \frac{\beta - 1}{2}}}{\lambda \tilde{\sigma}_N} \right)^Q \\
&= \sum_{N \in 2^{\mathbb{N}}} \exp\left( Q \log\left( \frac{(Qp)^{\frac{1}{2}} N^{\sigma + \frac{1}{p} - s - \frac{\beta - 1}{2}}}{\lambda \tilde{\sigma}_N} \right) \right).
\end{align*}
Let $\lambda \geq e \sqrt{p}$. Thus, by \eqref{eq:sigmaN},
for every $N \geq M_0$ we can always choose some $Q$ such that
\begin{align*}
\frac{(Qp)^{\frac{1}{2}} N^{\sigma + \frac{1}{p} - s - \frac{\beta - 1}{2}}}{\lambda \tilde{\sigma}_N} = \frac{1}{e} \Longleftrightarrow Q = \frac{\lambda^2 \tilde{\sigma}_N^2}{e^2 p N^{2\left( \sigma + \frac{1}{p} - s - \frac{\beta - 1}{2} \right)}}.
\end{align*}
Then
\begin{align*}
\gamma_s(\| u \|_{C^{\sigma}} > \lambda) \lesssim \sum_{N \in 2^{\mathbb{N}}: N \geq M_0} \exp \left( - \frac{\lambda^2 \tilde{\sigma}_N^2}{e^2 p N^{2\left( \sigma + \frac{1}{p} - s - \frac{\beta - 1}{2} \right)}} \right).
\end{align*}
Then, there exists some $c > 0$ such that
\begin{align*}
\gamma_s(\| u \|_{C^{\sigma}} > \lambda) \lesssim e^{-c \lambda^2},
\end{align*}
where the implicit constant does not depend on $\lambda$. To extend to $\lambda \leq e \sqrt{p}$ we argue as in the case $q < \infty$.
\end{proof}

\begin{proof}[Proof of Theorem \ref{thm:holderbound}]
As we said, we use the globalization argument from \cite{Bourgain1994}. Let $\sigma \in \left. \left[\frac{\beta}{2}, s + \frac{\beta-1}{2}\right)\right.$ such that $\sigma \notin \mathbb{N}$, $r > 0$, $t \in \mathbb{R}$, $\alpha \in (0,1)$ as it is defined in \eqref{eq:def-alpha} and $0 < \delta \ll 1$. Given $\tau > 0$ the proper time from Proposition \ref{prop:lwp-calpha}, we define the measurable sets\footnote{For negative times, we proceed identically. Thus, we reduce the globalization proof to positive times.}
\begin{align*}
A_{j,r} := \Phi_{-j\tau}(\{ u \in C^{\sigma} : \|u\|_{C^{\sigma}} \leq (1 + j\tau)^r \}), \quad j \in \mathbb{N}\cup\{0\}.
\end{align*}
By Theorem \ref{thm:qinv} and Lemma \ref{lemma:largedev-alpha} there exists some $C_{\delta,R}>0$ such that
\begin{align}\label{eq:mainbd-forcontrol}
\rho_s(A_{j,r}^c) \lesssim \exp\left( -(1 - \delta) (1 + j \tau)^{2r} + C_{\delta,R} (1 + j\tau)^{\frac{1}{1 - \alpha}} \right),
\end{align}
where the implicit constant does not depend on $j$. In order to get exponential decay, we require that $r > \frac{1}{2(1 - \alpha)}$. Indeed,
\begin{align*}
\sum_{j \geq 0} \rho_s(A_{j,r}^c) \lesssim \sum_{j \geq 0}  e^{-D_{\delta,R} (j\tau)^{2r}} < \infty,
\end{align*}
with $D_{\delta,R} > 0$ constant independent of $j$. The convergence of the last sum implies, by Borel-Cantelli Lemma, that
\begin{align*}
\rho_s(\limsup_{j \rightarrow \infty} A_{j,r}^c) = 0,
\end{align*}
i.e.
\begin{align*}
\rho_s(\liminf_{j \rightarrow \infty} A_{j,r}) = 1.
\end{align*}
Therefore, for $\rho_s$-almost every $u$ there exists some $N_u \in \mathbb{N}$ such that, for any $j \geq N_u$, $u \in A_{j,r}$. In other words, for any $j \geq N_u$ there exists some $v \in C^{\sigma}$ such that $u = \Phi(-j\tau) v$ and
\begin{align*}
    \|v\|_{C^{\sigma}} \leq (1 + j\tau)^r,
\end{align*}
so using the reversibility in time of the flow map,
\begin{align*}
\|\Phi(j\tau)u\|_{C^{\sigma}} \leq (1 + j\tau)^r.
\end{align*}
Let $\eta > 0$. Substitute $(1 + |t|)^r$ by $\eta (1 + |t|)^r$ in the previous computation. Then, for $\rho_s$-almost every $u$ there exists some $N_u \in \mathbb{N}$ such that, for $j \geq N_u$, we have that
\begin{align*}
\frac{\|\Phi(j\tau)u\|_{C^{\sigma}}}{(1 + j\tau)^r} \leq \eta,
\end{align*}
i.e. the left-hand side converges almost surely to $0$. Let $\varepsilon > 0$. By Egorov's Theorem, there exists some Borel set $D_{\varepsilon} \subset C^{\sigma}$ such that $\rho_s(D_{\varepsilon}) \geq 1 -  \varepsilon$ and that, for any $u \in D_{\varepsilon}$,
\begin{align*}
\lim_{j \rightarrow \infty} \frac{\|\Phi(j\tau)u\|_{C^{\sigma}}}{(1 + j\tau)^r} = 0.
\end{align*}
In other words, there exists some $N_{\varepsilon} \in \mathbb{N}$ (now independent of $u$) such that, for any $j \geq N_{\varepsilon}$,
\begin{align*}
\|\Phi(j\tau)u\|_{C^{\sigma}} \leq (1 + j \tau)^r,
\end{align*}
for any $u \in D_{\varepsilon}$. Thus, given $u \in D_{\varepsilon}$,
\begin{align*}
\|\Phi(j\tau)u\|_{C^{\sigma}} &\leq  \max\{(1 +  j \tau)^r , (1 + N_{\varepsilon}\tau)^r \} \\
&\lesssim (1 + j \tau)^r \max_{1 \leq j \leq N_{\varepsilon}}\{1 , N_{\varepsilon}^r / j^r \} = N_{\varepsilon}^r (1 + j \tau)^r.
\end{align*}
To extend to any $t \in \mathbb R$, we apply the local well-posedness theory from Proposition \ref{prop:lwp-calpha}. Let $t > 0$ and $j = \lfloor t / \tau \rfloor$. Then, for any $t \in (j\tau,(j+1)\tau]$ and $u \in D_{\varepsilon}$,
\begin{align*}
\|\Phi(t)u\|_{C^{\sigma}} \lesssim \|\Phi(j\tau)u\|_{C^{\sigma}},
\end{align*}
where we strongly used the fact that $\tau$ only depends on $\|u\|_{H^{\frac{\beta}{2}}}$. Therefore, 
\begin{align*}
\|\Phi(t)u\|_{C^{\sigma}} \lesssim N_{\varepsilon}^r (1 + j\tau)^r \leq N_{\varepsilon}^r (1 + |t|)^r.
\end{align*}

In order to obtain this polynomial time control $\rho_s$-almost surely, denote $(N_{1/k})_{k \in \mathbb{N}}$ a sequence defined in the construction of the Borel sets $D_{1/k}$, by picking $\varepsilon = 1/k$ for each of them, $k \in \mathbb{N}$. Define 
\begin{align*}
D := \cup_{k \in \mathbb{N}} D_{1/k}.
\end{align*}
We have that
\begin{itemize}
\item For each $u \in D$, there exists some $k_0 \in \mathbb{N}$ such that
\begin{align*}
\|\Phi(t)u\|_{C^{\sigma}} \leq N_{1/k_0}^r (1 + |t|)^r.
\end{align*}
\item Regarding its complement,
\begin{align*}
\mathbb{P}(D^c) = \mathbb{P}(\cap_{k \in \mathbb{N}} D_{1/k}^c) \leq \mathbb{P}(D_{1/k_1}^c) \leq \frac{1}{k_1}
\end{align*}
for any $k_1 \in \mathbb{N}$. Thus, $\mathbb{P}(D) = 1$. This concludes the proof.
\end{itemize}
\end{proof}

\section*{Acknowledgments}
	The author is supported by the Basque Government through the program BERC 2026-2029 (BCAM), by the project PID2024-156169NB-I00 (NCAFA), by the Severo Ochoa accreditation CEX2021-001142-S (BCAM), and by the predoctoral program of the Education Department of the Basque Government.
	
	The author thanks his PhD supervisor Renato Lucà for useful advices.



\begin{thebibliography}{10}

		
		\bibitem{BCD2011-book}
		H.~Bahouri, J.~Y.~Chemin and R.~Danchin.
		\newblock {\em Fourier analysis and nonlinear partial differential equations}.
		\newblock Grundlehren der Mathematischen Wissenschaften, Fundamental Principles of Mathematical Sciences, 343 (Springer, Heidelberg, 2011).


		\bibitem{Bogachev1998}
		V.I.~Bogachev.
		\newblock {\em Gaussian measures}.
		\newblock Mathematical surveys and monographs, 62 (American Mathematical Society, Providence, RI, 1998).


		\bibitem{Bourgain1994}
		J.~Bourgain.
		\newblock Periodic nonlinear Schrödinger equation and invariant measures.
		\newblock {\em Commun. Math. Phys.}, 166:1–--26, 1994.
		
		
		\bibitem{Bourgain1995-booksec}
		J.~Bourgain.
		\newblock Aspects of Long Time Behaviour of Solutions of Nonlinear Hamiltonian Evolution Equations.
		\newblock {\em Eliashberg Y, Milman V, Polterovich L, Schoen R, eds. Geometries in Interaction: GAFA Special Issue in Honor of Mikhail Gromov. Birkhäuser}, 1995:105–--140, 1995.	
		
		\bibitem{Bourgain1996-sob}
		J.~Bourgain.
		\newblock On the growth in time of higher Sobolev norms of smooth solutions of Hamiltonian PDE.
		\newblock {\em Int Math Res Notices}, 1996(6):277–--304, 1996.	
		
		
		
		\bibitem{BurqThomann2024}
		N.~Burq and L.~Thomann.
		\newblock Almost Sure Scattering for the One Dimensional Nonlinear Schrödinger Equation.
		\newblock {\em Memoirs of the AMS}, 296(1480), 2024.		
		
		
		
		
		\bibitem{BrezisMironescu2018}
		H.~Brezis and P.~Mironescu.
		\newblock Gagliardo–Nirenberg inequalities and non-inequalities: The full story.
		\newblock {\em Ann. Inst. Henri Poincare (C) Anal. Non Lineaire}, 35(5):1355---1376, 2018.	

		\bibitem{Forlano2025}
		J.~Forlano.
		\newblock Improved quasi-invariance result for the periodic Benjamin-Ono-BBM equation.
		\newblock {\em  	arXiv:2501.17180}, 2025.		
		
		
		\bibitem{ForlanoTolomeo2025}
		J.~Forlano and L.~Tolomeo.
		\newblock Quasi-invariance of Gaussian measures of negative regularity for fractional nonlinear Schrödinger equations.
		\newblock {\em J. Eur. Math. Soc.}, 2025.		
		
		\bibitem{GOTW22}
		T.~Gunaratnam, T.~Oh, N.~Tzvetkov and H.~Weber.
		\newblock Quasi-invariant Gaussian measures for the nonlinear wave equation in three dimensions.
		\newblock {\em Probab. Math. Phys.}, 3(2):343–--379, 2022.
	
		\bibitem{GLT2023}
		G. ~Genovese, R. ~Lucà, N. ~Tzvetkov. 
		\newblock Transport of Gaussian measures with exponential cut-off for Hamiltonian PDEs. 
		\newblock {\em J. Anal. Math.}, 150(2), 737---787, 2023.
		
		\bibitem{GLT2023-1}
		G. ~Genovese, R. ~Lucà, N. ~Tzvetkov. 
		\newblock Quasi-invariance of Gaussian measures for the periodic Benjamin-Ono-BBM equation. 
		\newblock {\em Stoch. Partial Differ. Equ. Anal. Comput.}, 11(2), 651---684, 2023.
		
		\bibitem{Hani2017}
		Z. ~Hani. 
		\newblock Out-of-equilibrium dynamics and statistics of dispersive PDE. 
		\newblock {\em JEDP}, 1---12, 2017.
				
		\bibitem{KenigPilod2016}
		C.E. ~Kenig, D. ~Pilod. 
		\newblock Local well-posedness for the KdV hierarchy at high regularity. 
		\newblock {\em Adv. Differential Equations}, 21 (9/10), 801---836, 2016.		
		
		
		
		
		\bibitem{L13}
		D.~Lannes.
		\newblock {\em The Water Waves Problem: Mathematical Analysis and Asymptotics}.
		\newblock Mathematical Surveys and Monographs, 188 (American Mathematical Society, 2013).
		
		\bibitem{OhTzvetkov2018}
		T. ~Oh, N. ~Tzvetkov. 
		\newblock Quasi-invariant Gaussian measures for the two-dimensional defocusing cubic nonlinear wave equation. 
		\newblock {\em J. Eur. Math. Soc.}, 22(6), 1785---1826, 2020.
		
		
		
		
		\bibitem{OhSosoeTzvetkov2018}
		T. ~Oh, P. ~Sosoe, N. ~Tzvetkov. 
		\newblock An optimal regularity result on the quasi-invariant Gaussian measures for the cubic fourth order nonlinear Schrödinger equation. 
		\newblock {\em J. Éc. polytech., Math.}, 5, 793---841, 2018.	
		
		
		\bibitem{Ramer1974}
		R. ~Ramer. 
		\newblock On nonlinear transformations of Gaussian measures. 
		\newblock {\em J. Funct. Anal.}, 15(2), 166---187, 1974.			
		

		\bibitem{Vershynin2018}
		R.~Vershynin.
		\newblock {\em High-Dimensional Probability: An Introduction with Applications in Data Science}.
		\newblock Cambridge Series in Statistical and Probabilistic Mathematics (Cambridge: Cambridge University Press, 2018).
		
		
	
		
		
		\bibitem{SunTzvetkov2025}
		C.~Sun, N.~Tzvetkov. 
		\newblock Almost sure global nonlinear smoothing for the 2D NLS.
		\newblock {\em arxiv.org/pdf/2512.13088}, 2025.	
		
		\bibitem{SteinWeiss1971}
		E.M.~Stein and G.~Weiss.
		\newblock {\em Introduction to Fourier Analysis on Euclidean Spaces}.
		\newblock Princeton Mathematical Series. Princeton University Press, 1971.
		
		\bibitem{Triebel1992}
		H.~Triebel.
		\newblock {\em Theory of Function Spaces II}.
		\newblock Princeton Mathematical Series. Springer, 1992.
		
		
		
		\bibitem{Tzvetkov2015}
		N. ~Tzvetkov. 
		\newblock Quasi-invariant Gaussian measures for one-dimensional Hamiltonian partial differential equations. 
		\newblock {\em Forum Math. Sigma}, 3, 2015.	
	
		
		\bibitem{TzvetkovVisciglia2014}
		N. ~Tzvetkov, N. ~Visciglia. 
		\newblock Invariant Measures and Long-Time Behavior for the Benjamin–Ono Equation. 
		\newblock {\em Int. Math. Res. Not.}, 2014(17), 4679---4714, 2014.
		
	\end{thebibliography}
\end{document}